\newtheorem{theorem}{Theorem}[section]
\newtheorem{lemma}[theorem]{Lemma}
\newtheorem{proposition}[theorem]{Proposition}
\newcommand{\Irr}{\operatorname{Irr}}
\newcommand{\GG}{\mathbb{G}}
 \title[$\ell$-Modular Representation of  $\mathbb{G}(\mathcal{O}_2)$]{\textbf{THE $\ell$-MODULAR REPRESENTATION OF REDUCTIVE GROUPS OVER FINITE LOCAL RINGS OF LENGTH TWO}}
\author{Nariel Monteiro}
\address{Department of Mathematics, Tufts University, 503 Boston Ave, Medford, MA 02155}
\email{nariel.monteiro@tufts.edu}
\begin{document}

\maketitle

\begin{abstract}
Let $\mathcal{O}_2$ and $\mathcal{O}'_2$ be two distinct finite local rings of length two with residue field of characteristic $p$. Let $\mathbb{G}(\mathcal{O}_2)$ and $\mathbb{G}(\mathcal{O}'_2)$, be the group of points of any reductive group scheme $\mathbb{G}$ over $\mathbb{Z}$ such that $p$ is very good for $\mathbb{G} \times \mathbb{F}_q$. We prove that there exists an isomorphism of group algebra $K[\mathbb{G}(\mathcal{O}_2)] \cong  K[\mathbb{G}(\mathcal{O}'_2)]$, where $K$ is a sufficiently large field of  characteristic different from $p$.
\end{abstract}

 \section{Introduction}

Let $\mathcal{O}$ be a discrete valuation ring with a unique maximal ideal $\mathfrak{p}$ and having finite residue field $\mathbb{F}_{q}$, the field with $q$ elements where $q$ is a power of a prime $p$. We denote  $\mathcal{O}_r$ to be the reduction of $\mathcal{O}$ modulo $\mathfrak{p}^r$. Similarly, given $\mathcal{O}'$ a second discrete valuation ring  with the same residue field  $\mathbb{F}_{q}$, we can define $\mathcal{O}'_r$. The complex representation theory of general linear groups over the rings $\mathcal{O}_r$ has been heavily study \cite{stasinski2017representations}. In particular, it has been conjectured by Onn \cite{OnnUri2008Roag} that there is an isomorphism of group algebras $\mathbb{C}[\mathbb{GL}_n(\mathcal{O}_r)]\cong\mathbb{C}[\mathbb{GL}_n(\mathcal{O}'_r)]$. When $r=2$, this conjecture was proven by Singla \cite{SinglaPooja2010Orog}. Moreover, assuming $p$ is odd, Singla proved a generalization of the conjecture for $r=2$ when $\GG$ is either $\mathrm{SL}_n$ with $p \nmid n$ or the following classical groups $\mathrm{Sp}_n$, $\mathrm{O}_n$, and $\mathrm{U}_n$ \cite{SinglaPooja2012ORoC}. Later on, Stasinski proved it for  $\mathrm{SL}_n$ for all $p$ \cite{StasinskiAlexander2019Romo}. More generally, for a possibly non-classical reductive group, Stasinski and Vera-Gajardo compared the complex representation of the two groups in question \cite{StasinskiAlexander2019Rorg}. 

Let us briefly describe the result of Stasinski and Vera-Gajardo. 
First recall that for any group scheme $\GG$ over $\mathbb{Z}$ and any commutative ring $R$, we may speak of the group $\mathbb{G}(R)$ of $R$-points.
We now let $\GG$ be a reductive group scheme over $\mathbb{Z}$ -- for details on such group schemes see e.g. \cite{demazure11:MR2867622} and \cite{jantzen03:MR2015057}.
Note that for any root datum -- see e.g. \cite{jantzen03:MR2015057} -- there is a split
reductive group scheme $\GG$ over $\mathbb{Z}$ with this root datum \cite[Exp. XXV Theorem 1.1]{demazure11:MR2867622}. The group $\GG \times \mathbb{F}_q$ obtained
by base-change with the finite field $\mathbb{F}_q$ of characteristic $p$ is a reductive algebraic group over $\mathbb{F}_q$, and we want to insist that
$$\mathbf{(*)} \quad \text{$p$ is \emph{very good} for $\GG \times \mathbb{F}_q$
;}$$ we observe that the condition $\mathbf{(*)}$ depends only on the \emph{root datum} of $\GG$ -- see \cite[Section 4]{springer70:MR0268192} for the definition of good/very good primes.

Under assumption $\mathbf{(*)}$, Staskinski and Vera-Gajardo proved that  $\mathbb{C}[\mathbb{G}(\mathcal{O}_2)]\cong\mathbb{C}[\mathbb{G}(\mathcal{O}'_2)]$. When $\GG \times \mathbb{F}_q$ is a classical absolutely simple algebraic group not of type $A$, any $p>2$ is very good for $\GG.$ When $G = \operatorname{SL}_n$ or $\operatorname{SU}_n$, $p$ is very good for $\GG$ if and only if $n \not \equiv 0 \pmod p.$ If $\GG \times \mathbb{F}_q$ is absolutely simple of exceptional type, any $p>5$ is \emph{very good} for $G$; see  \cite[4.3]{springer70:MR0268192} for the precise conditions when $p \le 5$.

 We study the $\ell$-modular representation of such group scheme over $\mathcal{O}_2$ a local ring of length two with finite residue field. One can show that $\mathcal{O}_2$ is isomorphic to one of the following rings: $\mathbb{F}_{q}[t]/t^{2}$ or $W_{2}(\mathbb{F}_{q})$, the ring of Witt vectors of length two over $\mathbb{F}_{q}$  \cite [Lemma 2.1]{StasinskiAlexander2019Rorg}. Thus, we let $G_2=\mathbb{G}(\mathcal{O}_2)$ and $G'_2=\mathbb{G}(\mathcal{O}'_2)$, where $\mathbb{G}$ stands for a reductive group scheme over $\mathbb{Z}$ such that $p$ is very good for $\mathbb{G} \times \mathbb{F}_q$.

In this paper, we generalize the previous results over $\mathbb{C}$ to results over a sufficiently large field $K$ of characteristic $l \neq p$. More precisely, we prove that there exists an isomorphism of group algebras $KG_2 \cong KG'_2$ over a sufficiently large field $K$ of characteristic $l$ as long as $l\neq p$. We take $K$ to be a sufficiently large so that the representation theory of the groups over $K$ is the same as the representation theory over an algebraically closed field of characteristic $l$. 

In order for us to understand those two group algebras, we study their decomposition by block algebras. Given $A$ a $K$-algebra, we define a block of $A$ to be a primitive idempotent $b$ in the center of $A$, which is denoted as $Z(A)$; the algebra $Ab$ is called a block algebra of $A$. By an idempotent, we mean an element $b$ such that $b^2=b$ and primitive if $b = b_1 + b_2$ is an expression of $b$ as a sum of idempotents such that
$b_1b_2 = 0$, either $b_1 = 0$ or $b_2 = 0$. Moreover, the block algebra $Ab$ is an indecomposable two-sided ideal summand of $A$. Thus for a finite group $G$, we can write $$KG=B_1 \oplus B_2 \oplus \cdots \oplus B_n$$
where the $B_i=e_iKG$ are the unique block algebras of $KG$ up to ordering \cite{craven2019representation}. Moreover $e_i\cdot e_j=0$ whenever $i \neq j$.

We investigate blocks of $KG$ which we denote as $Bl(G)$. To understand the block algebra of those two group algebras, we exploit the fact that those two groups are extensions of $\mathbb{G}(\mathbb{F}_{q})$ by an abelian $p$-group denoted as  $N$ which is isomorphic to the Lie algebra of $\mathbb{G}(\mathbb{F}_{q})$ denoted as $\mathfrak{g}$ \cite [Lemma 2.3.]{StasinskiAlexander2019Rorg}. In fact, for the rest of this section let $G$ denote either $G_2$ or $G'_2$. Let the map $$\rho:G \to  \mathbb{G}(\mathbb{F}_{q})$$ be the surjective map obtained from the map $\mathcal{O}_2 \to \mathbb{F}_{q} $ with $N=ker(\rho)$. The two groups $G_2$ and $G'_2$  act on $\mathfrak{g}$, via its quotient $\mathbb{G}(\mathbb{F}_{q})$. This action is transformed by the above isomoprhism into the action of $G$ on its normal subgroup $N$, we explore the details of this action in section 3. In section 2, we use Clifford's theory to relate blocks of $G$ with blocks of $N$. Specifically, we have that any block $b \in Bl(G)$ is equal to $Tr^{G}_{H}(d)=\sum\limits_{g \in {G/H}}gdg^{-1}$ for $d$ in $Bl(H)$, where $H$ is the stabilizer  in $G$ for some block of $N$. In section 4, we exploit the fact that Clifford theory, in combination with the results obtained in the characteristic zero case, gives an isomorphism between certain blocks algebra of the stabilizer of those two groups. In section 5, we induce the previous isomorphism to an isomorphism between block algebras of $G_2$ and $G'_2$ by taking advantage of the fact that blocks of a group algebra are interior $G$-algebras. This isomorphism will then give rise to an isomorphism between those two group algebras. 

\subsection*{Acknowledgements}

I am extremely grateful to my advisor George McNinch for suggesting this project and his invaluable guidance and support throughout it. I thank Alexander Stasinski for reading a first draft of this article and providing very valuable and constructive feedback. This research was partially supported by Thesis Writing Fellowship, Noah Snyder's NSF CAREER Grant DMS-1454767.

 \section{Background on Clifford theory of blocks}

 Let $G$ be a finite group and $N$ a normal subgroup of $G$. We use Clifford theory to relate  blocks of $KG$ with blocks of $KN$. Given $b$ a block of $KG$ and $e$ a block of $KN$, we say $b$ covers $e$ if $be \neq 0$. We denote the set of blocks of $G$ that covers $e$ by $Bl(G|e)$. Note that there is a natural action of $G$ on the set of blocks of $N$ by conjugation. Moreover one can show that for a fixed $b$, the set of blocks $e$ of $KN$ satisfying $be \neq 0$ is a $G$-conjugacy class of blocks of $KN$ \cite[Proposition 6.8.2]{linckelmann2018block}.
 
The following Clifford theorem for blocks of $G$ holds:

\vspace{5mm}
 
\begin{theorem}
\label{theorem:Clifford}
Let $G$ be a finite group and $N$ a normal subgroup of $G$. Given a block $b \in Bl(G)$, we have:

\begin{enumerate}
    \item there exists a block $e \in Bl(N)$ such that $b\cdot e \neq 0$ so $b \in Bl(G|e)$
    
    \item if $e$ is as in (1), then $b=Tr_H^G(d):=\sum\limits_{g \in {G/H}}gdg^{-1} $ for a unique block $d \in Bl(H|e)$ where $H$ is the stabilizer of $e$ in $G$ under the conjugation action.
    
    \item The assignment $d\mapsto Tr_H^G(d)=b$ gives a bijection between $Bl(H|e)$ and $Bl(G|e)$.
    
\end{enumerate}
 
\end{theorem}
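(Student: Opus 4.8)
The plan is to establish (1) immediately and then derive (2) and (3) together from the structure of the central idempotent attached to the $G$-orbit of $e$. For (1), recall that the block idempotents of $KN$ sum to the identity, $1 = \sum_{e \in Bl(N)} e$. Multiplying a block $b \in Bl(G)$ by this relation gives $b = \sum_e be$, and since $b \neq 0$ at least one summand $be$ is nonzero; thus $b$ covers some $e$, i.e. $b \in Bl(G|e)$.

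For (2) and (3), fix $e \in Bl(N)$ and let $H = G_e$ be its stabilizer, noting $N \subseteq H$ because $e \in Z(KN)$ is fixed under $N$-conjugation. First I would record two facts about $f := Tr_H^G(e) = \sum_{g \in G/H} geg^{-1}$. By orbit--stabilizer, the conjugates $geg^{-1}$ for $g$ running over $G/H$ are exactly the members of the $G$-orbit of $e$, hence are pairwise distinct blocks of $N$ and therefore pairwise orthogonal idempotents; consequently $f$ is an idempotent. Since $f$ is visibly $G$-invariant, conjugation by any $x \in G$ fixes $f$, which is equivalent to $f \in Z(KG)$, so $f$ is a central idempotent. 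Because the blocks of $N$ covered by a fixed block of $G$ form a single $G$-orbit (the cited Proposition 6.8.2), one checks that $bf = b$ holds exactly when $b \in Bl(G|e)$, so that $f = \sum_{b \in Bl(G|e)} b$ and $KGf = \bigoplus_{b \in Bl(G|e)} KGb$.

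Next I would show that $Tr_H^G(d)$ is a block of $G$ for each $d \in Bl(H|e)$. The same orbit--orthogonality argument shows $Tr_H^G(d) = \sum_{g \in G/H} gdg^{-1}$ is an idempotent (using $d = de$, so $gdg^{-1} = (gdg^{-1})(geg^{-1})$ and distinct cosets force orthogonality through the orthogonal blocks $geg^{-1}$ of $N$), and $G$-invariance again gives centrality. The key computation is $Tr_H^G(d)\, e = d$: the identity coset contributes $de = d$, while every other term gives $gdg^{-1} e = (gdg^{-1})(geg^{-1})e = 0$ since $geg^{-1}$ and $e$ are distinct, hence orthogonal, blocks of $N$. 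Primitivity then follows: if $Tr_H^G(d) = b_1 + b_2$ with $b_1, b_2$ orthogonal central idempotents of $KG$, multiplying by $e$ and using $b_i e \in Z(KH)$ expresses the primitive central idempotent $d = b_1 e + b_2 e$ of $KH$ as a sum of two orthogonal central idempotents, forcing say $b_2 e = 0$; conjugating gives $b_2\, geg^{-1} = 0$ for all $g$ (as $b_2$ is $G$-fixed), whence $b_2 = b_2 Tr_H^G(d) = \sum_g b_2 (geg^{-1})(gdg^{-1}) = 0$. Thus $Tr_H^G(d) \in Bl(G|e)$, and $d \mapsto Tr_H^G(d)$ is injective because $Tr_H^G(d)\,e = d$ recovers $d$.

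Finally, bijectivity onto $Bl(G|e)$ falls out by a counting identity. Since $e = \sum_{d \in Bl(H|e)} d$ is the block decomposition of the central idempotent $e$ of $KH$, linearity of the trace gives $\sum_{d \in Bl(H|e)} Tr_H^G(d) = Tr_H^G(e) = f = \sum_{b \in Bl(G|e)} b$. As the left side is a sum of distinct blocks lying in $Bl(G|e)$ and the right side is the sum of all of them, the orthogonality (linear independence) of block idempotents forces $d \mapsto Tr_H^G(d)$ to be a bijection $Bl(H|e) \to Bl(G|e)$, with inverse $b \mapsto be$. The main obstacle is the primitivity step: everything else is formal manipulation of orthogonal idempotents, but verifying that $Tr_H^G(d)$ does not decompose further genuinely requires both the multiplication-by-$e$ reduction to $KH$ and the single-orbit property of covered blocks.
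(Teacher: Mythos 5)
Your overall architecture is sound, and for what it is worth the paper itself does not prove this theorem at all: it is quoted as standard Clifford theory of blocks with a pointer to Linckelmann, Nagao and Craven, so your attempt has to be measured against the standard proofs (the Fong--Reynolds correspondence, \cite[Theorem 6.8.3]{linckelmann2018block}). Within your argument, part (1), the idempotency and centrality of $Tr_H^G(e)$ and of $Tr_H^G(d)$, the identity $Tr_H^G(d)\,e=d$, the resulting injectivity, and the closing counting identity $\sum_{d}Tr_H^G(d)=Tr_H^G(e)=f=\sum_{b}b$ are all correct as written.

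The gap is in the primitivity step, exactly at the parenthetical assertion ``$b_ie\in Z(KH)$''. What is immediate is that $b_ie$ is an idempotent commuting with $KH$: indeed $b_i$ is central in $KG$, and $e$ is $H$-stable, hence commutes with every element of $H$. What is \emph{not} immediate, and what you nowhere prove, is that $b_ie$ lies in $KH$ at all; a priori it is only an element of $KG$, and an orthogonal decomposition $d=b_1e+b_2e$ inside the centralizer $C_{KG}(KH)$ does not contradict primitivity of $d$ in $Z(KH)$. This membership claim is precisely the nontrivial kernel of the theorem, and in the standard proofs it is supplied by the lemma
$$eKGe=KHe,$$
proved via the $G/N$-grading $KG=\bigoplus_{x\in[G/N]}xKN$: for $x\notin H$ one has $e(xKN)e=x\,(x^{-1}ex)KNe=x\,KN(x^{-1}ex)e=0$, because $x^{-1}ex$ and $e$ are then distinct, hence orthogonal, blocks of $KN$. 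Granting this, $b_ie=eb_ie\in KHe$, and combined with the commutation above one gets $b_ie\in Z(KH)e$, which is what your argument needs. Note that the gap propagates into part (3): your counting identity only forces a bijection once each $Tr_H^G(d)$ is known to be a block, i.e.\ once primitivity is established. The fix is short --- two or three lines using orthogonality arguments you already deploy elsewhere --- but as written the crucial step is asserted rather than proved, even though you correctly flag primitivity as the main obstacle.
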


For more information about Clifford theory of blocks consult Linckelmann \cite{linckelmann2018block}, Nagao \cite{nagao2014representations}, or Craven \cite{craven2019representation}. Thus in order for us to understand the block algebra of those two groups, we start by investigating the block structure of $eKH$ for a fixed block $e \in Bl(N)$, where as above $H$ is the stabilizer of $e$ in $G$, under the conjugation action.
 
 Let $\Irr_K(N)$ be the set of irreducible characters of $KN$. Since $l$ does not divide $|N|$ (because N is a p-group), any block of $KN$ is defined by  $e=e_\chi= \frac{\chi(1)}{|N|}\sum\limits_{g \in N}\chi(g^{-1}) g$, for some $\chi$ in $\Irr_K(N)$ \cite{navarro1998characters}. Specifically, $KN$ is a semisimple algebra and $eKN$ is a matrix algebra, so it has defect zero. Thus, we can take advantage of the following proposition to show that $eKH \cong eKN \otimes_K K^{\alpha^{-1}}L$. For an $\alpha$ in $Z^2(G;K^*)$, we denote $K^{\alpha}G$ as the twisted group algebra of $G$ by $\alpha$. It has basis as $K$-algebra the elements of $G$ and  given $x,y \in G$, we define $x \cdot y = \alpha(x, y)xy$
 where $xy$ is the product of $x$ and $y$ in $G$. Furthermore, given a different  $\beta$ in $Z^2(G;K^*)$ there is a G-graded algebra isomorphism $K^{\alpha}G \cong K^{\beta}G$ if and only if the classes of $\alpha$ and $\beta$ are equal in $H^2(G;K^*)$ \cite{linckelmann2018block1}. Thus, for the purpose of this paper $\alpha$ is defined up to an element in the second cohomology group $H^2(G;K^*)$ with $G$ acting trivially on $K^*$.
 
\vspace{5mm}
 
  \begin{proposition}
  \label{proposition:Twisted}
   (Theorem 6.8.13, Linckelmann \cite{linckelmann2018block}) Let H be a finite group, N a normal subgroup of H and $e$ a $H$-stable block of defect zero of $KN$. Set $S = KNe$ and suppose that $K$ is a splitting field for S. Set $L = H/N$. For any $x \in H$ there is $s_x \in S^*$ such that $xtx^{-1} = s_xt (s_x )^{-1}$ for all $t \in S$ and such that $s_xs_y = s_{xy}$ if at least one of $x$, $y$ is in $N$. Then the 2-cocycle $\alpha \in Z^2(H;K^*)$ defined by $s_xs_y = \alpha(x, y)s_{xy}$ for $x, y \in N$ depends only on the images of $x, y$ in $L$ and induces a 2-cocycle, still denoted $\alpha$, in $Z^2(L;K^*)$, and we have an isomorphism of $K$-algebras 
  $$KHe \xrightarrow{\sim} S \otimes_K K^{\alpha^{-1}} L$$
sending $xe$ to $s_x \otimes \bar{x}$, where $x \in H $and $\bar{x} $ is the image of $x$ in $L$.
\end{proposition}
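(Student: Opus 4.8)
The plan is to exploit the very special structure of $S=KNe$. Being a block of defect zero over the splitting field $K$, it is isomorphic to a full matrix algebra $M_n(K)$, hence a central simple $K$-algebra with centre $Z(S)=K$; in particular, by the Skolem--Noether theorem every $K$-algebra automorphism of $S$ is inner. Since $N\trianglelefteq H$ and $e$ is $H$-stable, conjugation by any $x\in H$ maps $KN$ to itself and fixes $e$, so it restricts to a $K$-algebra automorphism of $S$. Applying Skolem--Noether then yields a unit $s_x\in S^*$ with $xtx^{-1}=s_xts_x^{-1}$ for all $t\in S$, and this $s_x$ is determined up to a scalar in $Z(S)^*=K^*$.

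First I would pin down the normalization. For $x\in N$ the inner automorphism is already implemented by $xe\in S^*$, so I set $s_x=xe$. Fixing a transversal $T$ of $N$ in $H$ and, for each $t\in T$, an arbitrary unit $s_t$ implementing conjugation by $t$, I extend to all of $H$ by $s_{nt}=(ne)s_t$ for $n\in N$, $t\in T$. A direct computation, using that conjugation by $s_t$ agrees with conjugation by $t$ on $S$, then shows $s_xs_y=s_{xy}$ whenever at least one of $x,y$ lies in $N$, which is the required partial multiplicativity.

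Next, for arbitrary $x,y\in H$ the units $s_xs_y$ and $s_{xy}$ implement the same automorphism of $S$ (conjugation by $xy$), so they differ by a scalar $\alpha(x,y)\in K^*$, giving $s_xs_y=\alpha(x,y)s_{xy}$. Associativity of multiplication in $S$ translates directly into the $2$-cocycle identity, so $\alpha\in Z^2(H;K^*)$. The partial multiplicativity from the previous step shows that $\alpha(x,y)$ is unchanged when $x$ or $y$ is multiplied on the appropriate side by an element of $N$; hence $\alpha$ factors through $L\times L$ and descends to a cocycle, still denoted $\alpha$, in $Z^2(L;K^*)$. This descent is the step requiring the most care and is, I expect, the main obstacle: it is precisely where the choice $s_x=xe$ for $x\in N$ must interlock correctly with the representatives $s_t$ so that the resulting scalar depends only on the cosets $\bar x,\bar y\in L$.

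Finally I would define the $K$-linear map $KHe\to S\otimes_K K^{\alpha^{-1}}L$ by $xe\mapsto s_x\otimes\bar x$ and verify it is an algebra homomorphism. Since $e$ is $H$-stable it is central in $KH$, so on the left $(xe)(ye)=xye$, whereas on the right
$$(s_x\otimes\bar x)(s_y\otimes\bar y)=s_xs_y\otimes\alpha^{-1}(\bar x,\bar y)\,\overline{xy}=\alpha(x,y)\alpha^{-1}(\bar x,\bar y)\,s_{xy}\otimes\overline{xy}=s_{xy}\otimes\overline{xy},$$
the two twists cancelling exactly because $\alpha$ is inflated from $L$, so $\alpha(x,y)=\alpha(\bar x,\bar y)$. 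To see the map is bijective, note that $KHe=\bigoplus_{x\in H/N}xS$ has $K$-dimension $|L|\dim_K S=\dim_K\bigl(S\otimes_K K^{\alpha^{-1}}L\bigr)$. Surjectivity follows because the image contains $S\otimes 1$ (the image of $S=KNe$ under the normalization $s_n=ne$) together with the units $s_x\otimes\bar x$, hence it contains $Ss_x\otimes\bar x=S\otimes\bar x$ for every coset. A surjection between $K$-spaces of equal finite dimension is an isomorphism, completing the argument.
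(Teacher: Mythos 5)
The paper does not prove this proposition at all: it is imported verbatim as Theorem 6.8.13 of Linckelmann's book, so there is no in-paper proof to compare against, and your argument has to be judged on its own. It is correct in all essentials and is in fact the standard proof: Skolem--Noether applied to the matrix algebra $S=KNe$ (defect zero plus splitting field) produces the units $s_x$ up to scalars in $Z(S)^*=K^*$; the normalization $s_n=ne$ for $n\in N$, extended along a transversal by $s_{nt}=(ne)s_t$, gives the partial multiplicativity (for the case $y\in N$ one uses $s_t(ye)=(tyt^{-1}e)s_t$); associativity in $S$ yields the cocycle identity; and the descent to $Z^2(L;K^*)$ follows from the two invariance computations $s_{nx}s_y=(ne)s_xs_y$ and $s_xs_{ny}=(xnx^{-1}e)s_xs_y$, both consequences of the partial multiplicativity, exactly as you indicate. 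The homomorphism check with the cancelling twists and the surjectivity-plus-dimension-count conclusion are also fine.

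One step you should make explicit: well-definedness of the map $xe\mapsto s_x\otimes\bar x$. The elements $xe$, $x\in H$, merely span $KHe$ and satisfy many linear relations (in the paper's application $N$ is abelian, so $S$ is one-dimensional and $ne=\chi(n)e$ for all $n\in N$), so an assignment on this spanning set does not automatically extend to a linear map. It does work here, and precisely because of your normalization: writing $KHe=\bigoplus_{t\in T}S\,te$, a relation $\sum_{x}c_x\,xe=0$ is equivalent to $\sum_{n\in N}c_{nt}\,ne=0$ in $S$ for each $t\in T$, and then
$$\sum_{n\in N}c_{nt}\,s_{nt}\otimes\bar t=\Bigl(\sum_{n\in N}c_{nt}\,ne\Bigr)s_t\otimes\bar t=0,$$
so all relations are killed. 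Equivalently, and more cleanly, define the multiplicative map on all of $KH$ by $x\mapsto s_x\otimes\bar x$ (the $x\in H$ form a basis there), observe that it sends $e$ to the identity $e\otimes 1$ of the target, and restrict to $KHe$. With that addition your proof is complete and agrees with Linckelmann's.
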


\section{The action of $G_2$ and $G'_2$ on the kernel $N$}

In this section, we study the action of $G$ on the kernel $N$ so we can understand the stabilizer $H$ mod $N$ and thus we can take advantage of the \Cref{proposition:Twisted}. The two groups $G_2$ and $G'_2$ act on $\mathfrak{g}$, via its quotient $\mathbb{G}(\mathbb{F}_{q})$. Particularly, there is a natural adjoint action of $\mathbb{G}(\mathbb{F}_{q})$ on $\mathfrak{g}$: $$\operatorname{Ad}:\mathbb{G}(\mathbb{F}_{q}) \to \operatorname{GL}(\mathfrak{g}).$$ 

Also there is an automorphism of $\sigma :\mathbb{G}(\mathbb{F}_{q}) \to \mathbb{G}(\mathbb{F}_{q})$ given by raising each matrix entry to the $p$-th power. Note that $\sigma$ composed with $\operatorname{Ad}$ gives rise to a second action of $\mathbb{G}(\mathbb{F}_{q})$ on $\mathfrak{g}$. Thus, given $X \in \mathfrak{g}$, we note that the action of $G_2=\mathbb{G}(\mathbb{F}_{q}[t]/t^{2})$ and $G'_2=\mathbb{G}(W_{2}(\mathbb{F}_{q}))$ are as follow:
$$g\cdot_{1}X=\operatorname{Ad}(\Bar{g})X  \quad \text{for}\, g \in G_2$$
$$g\cdot_{2}X=\operatorname{Ad}(\sigma(\Bar{g}))X \quad \text{for}\, g \in G'_2,$$ 

where $\Bar{g}=\rho(g)$, see \cite[Section 2.3]{StasinskiAlexander2019Rorg} for more details. 
Moreover, we also have actions of both $G_2$ and $G'_2$ on $\mathfrak{g}^*= \operatorname{Hom}_{\mathbb{F}_{q}}(\mathfrak{g},\mathbb{F}_{q})$ and $\Irr_K(\mathfrak{g})$. Given $F$ an element of $\mathfrak{g}^*$ or $\Irr_K(\mathfrak{g})$ and $X \in \mathfrak{g}$, we define: 
 $$(g\cdot_{1}F)(X)=F(g^{-1} \cdot_{1}X)  \quad \text{for}\, g \in G_2$$
 $$(g\cdot_{2}F)(X)=F(g^{-1} \cdot_{2}X)  \quad \text{for}\, g \in G'_2.$$
 \begin{lemma}
 
\label{lemma:Characters}
Let $\chi$ and $\tau$ be characters of $\mathfrak{g}$ then $^{g}\chi=\tau$ for some $g \in G'_2$ if and only if there is $h \in G_2$ such that $^{h}\chi=\tau$.
\end{lemma}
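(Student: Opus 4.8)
The plan is to show that the two $G$-actions on $\Irr_K(\mathfrak{g})$ have exactly the same orbits, from which the stated equivalence follows immediately. The key observation is that both actions factor through the common quotient $\mathbb{G}(\mathbb{F}_q)$ via the surjection $\rho$, and that they differ only by the automorphism $\sigma$, which is a bijection of $\mathbb{G}(\mathbb{F}_q)$.

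First I would record how each action factors. For $y \in \mathbb{G}(\mathbb{F}_q)$ and a character $F$, set $(y * F)(X) = F(\operatorname{Ad}(y^{-1})X)$; this defines a genuine action of $\mathbb{G}(\mathbb{F}_q)$ on $\Irr_K(\mathfrak{g})$. Unwinding the definitions of $\cdot_1$ and $\cdot_2$ and using that $\sigma$ is a group homomorphism (so $\sigma(\bar{g}^{-1}) = \sigma(\bar{g})^{-1}$), one gets $g \cdot_1 F = \rho(g) * F$ for $g \in G_2$ and $g \cdot_2 F = \sigma(\rho(g)) * F$ for $g \in G'_2$. Thus the $\cdot_1$-action of $G_2$ is the $*$-action pulled back along $\rho$, while the $\cdot_2$-action of $G'_2$ is the $*$-action pulled back along $\sigma \circ \rho$.

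Next I would compute the two orbits of $\chi$. Since $\rho$ is surjective, the $G_2$-orbit of $\chi$ equals $\{y * \chi : y \in \mathbb{G}(\mathbb{F}_q)\}$, the full $\mathbb{G}(\mathbb{F}_q)$-orbit. Likewise, using surjectivity of $\rho$ for $G'_2$, the $G'_2$-orbit of $\chi$ equals $\{\sigma(y) * \chi : y \in \mathbb{G}(\mathbb{F}_q)\}$. The remaining point is that $\sigma$, being induced by the Frobenius $x \mapsto x^p$ on the finite field $\mathbb{F}_q$, is an automorphism of $\mathbb{G}(\mathbb{F}_q)$; in particular it is a bijection, so $\{\sigma(y) : y \in \mathbb{G}(\mathbb{F}_q)\} = \mathbb{G}(\mathbb{F}_q)$. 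Hence the $G'_2$-orbit of $\chi$ equals $\{z * \chi : z \in \mathbb{G}(\mathbb{F}_q)\}$, which is exactly the $G_2$-orbit of $\chi$. Since $\tau$ lies in one orbit if and only if it lies in the other, the lemma follows.

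There is no serious obstacle here; the content is entirely bookkeeping. The only points requiring care are verifying that the contravariant action on characters (the inverse appearing in $(g \cdot F)(X) = F(g^{-1} \cdot X)$) is correctly transported through $\sigma$ — which uses only that $\sigma$ is a homomorphism — and confirming that $\sigma$ restricts to a bijection of $\mathbb{G}(\mathbb{F}_q)$, which holds because the Frobenius is an automorphism of the finite residue field.
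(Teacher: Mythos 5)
Your proof is correct and is essentially the paper's argument repackaged at the level of orbits: the paper directly exhibits, for $g \in G'_2$, an element $h \in \rho^{-1}(\sigma(\bar{g})) \subset G_2$ with $^{g}\chi = {}^{h}\chi$, which is exactly your observation that both actions factor through the $\mathbb{G}(\mathbb{F}_q)$-action via the surjections $\rho$ and $\sigma \circ \rho$, with $\sigma$ an automorphism. The only cosmetic difference is that you state the conclusion as equality of orbits, which handles both implications at once, whereas the paper does the one-line computation in a single direction and leaves the converse to symmetry.
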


\begin{proof}
Let $g \in G'_2$ and $h \in \rho^{-1}(\sigma(\Bar{g}^{-1}) \subset G_2$ then we have:  $$^{g}\chi(X)=(g\cdot_{2}\chi)(X)=\chi(g^{-1} \cdot_{2}X)=\chi(\rho^{-1}(\sigma(\Bar{g}^{-1})) \cdot_{1}X)=\ ^{h}\chi(X).$$ 

\end{proof}

  Given a non-trivial irreducible character $\phi:\mathbb{F}_{q} \to K^*$ and for each $\beta \in \mathfrak{g}^*$, we define the character $\chi_{\beta} \in \Irr(\mathfrak{g})$ by 
 
 $$\chi_{\beta}(X)=\phi(\beta(X)).$$

\begin{lemma}
\label{lemma:character2}
\cite [Lemma 4.1]{StasinskiAlexander2019Rorg} The map $\beta \mapsto \chi_\beta$ defines an isomorphism of abelian group $\mathfrak{g}^* \to \Irr(\mathfrak{g})$ that is $G$-invariant i.e.  for $g \in G$ we have

$$g \cdot_{1} \beta \mapsto g \cdot_{1} \chi_\beta \quad \text{or} \quad  g \cdot_{2} \beta \mapsto g \cdot_{2} \chi_\beta. $$
\end{lemma}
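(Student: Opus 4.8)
The plan is to prove the two assertions separately: first that $\beta \mapsto \chi_\beta$ is an isomorphism of finite abelian groups, and then that it intertwines the two actions on $\mathfrak{g}^*$ and on $\Irr(\mathfrak{g})$. Throughout I would use that $\mathfrak{g}$, as a group, is the additive group of a finite-dimensional $\mathbb{F}_q$-vector space, that $K$ is a splitting field of characteristic $\ell \neq p$, so every irreducible character of $\mathfrak{g}$ is linear and $\Irr(\mathfrak{g}) = \operatorname{Hom}(\mathfrak{g}, K^*)$ is an abelian group under pointwise multiplication, and that $\phi \colon \mathbb{F}_q \to K^*$ is a fixed nontrivial additive character.

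For the homomorphism property I would compute directly: since $\phi$ is additive and each $\beta$ is $\mathbb{F}_q$-linear,
$$\chi_{\beta + \beta'}(X) = \phi\big(\beta(X) + \beta'(X)\big) = \phi(\beta(X))\,\phi(\beta'(X)) = \chi_\beta(X)\,\chi_{\beta'}(X),$$
so $\chi_{\beta + \beta'} = \chi_\beta\,\chi_{\beta'}$ in $\Irr(\mathfrak{g})$. For bijectivity I would argue by injectivity together with a count. The crux is nondegeneracy: if $\chi_\beta$ is trivial then $\phi(\beta(X)) = 1$ for all $X \in \mathfrak{g}$; but the image of the $\mathbb{F}_q$-linear map $\beta$ is an $\mathbb{F}_q$-subspace of the one-dimensional space $\mathbb{F}_q$, hence either $0$ or all of $\mathbb{F}_q$. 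If $\beta \neq 0$ the image is all of $\mathbb{F}_q$, and since $\phi$ is nontrivial the function $\phi \circ \beta$ would then assume a value $\neq 1$, a contradiction; thus $\beta = 0$, proving injectivity. A pure count finishes it: $|\mathfrak{g}^*| = |\mathfrak{g}|$ because $\mathfrak{g}^*$ is the $\mathbb{F}_q$-dual of a finite-dimensional $\mathbb{F}_q$-space, while $|\Irr(\mathfrak{g})| = |\mathfrak{g}|$ since $\mathfrak{g}$ is finite abelian and $K$ is large, so an injective homomorphism between finite abelian groups of equal order is an isomorphism.

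For the $G$-equivariance I would simply unwind the definitions, treating $\cdot_1$ and $\cdot_2$ uniformly. For $g \in G$ and $X \in \mathfrak{g}$,
$$\chi_{g \cdot \beta}(X) = \phi\big((g \cdot \beta)(X)\big) = \phi\big(\beta(g^{-1} \cdot X)\big) = \chi_\beta(g^{-1} \cdot X) = (g \cdot \chi_\beta)(X),$$
where the entire chain is read with every $\cdot$ as $\cdot_1$ (for $g \in G_2$) or as $\cdot_2$ (for $g \in G'_2$), since both the action on $\mathfrak{g}^*$ and the action on $\Irr(\mathfrak{g})$ are defined by precomposition with $g^{-1}$ on $\mathfrak{g}$. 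Hence $\chi_{g \cdot \beta} = g \cdot \chi_\beta$, which is exactly the asserted equivariance.

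The only step with genuine content is the nondegeneracy underlying injectivity; the homomorphism and equivariance assertions are forced by the definitions, and surjectivity is a counting consequence once injectivity and $|\Irr(\mathfrak{g})| = |\mathfrak{g}|$ are in hand. I therefore expect no real obstacle beyond carefully recording why a nonzero $\mathbb{F}_q$-linear functional cannot be annihilated by the nontrivial character $\phi$.
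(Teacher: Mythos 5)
Your proof is correct. Note that the paper itself gives no proof of this lemma at all: it is quoted directly from Stasinski--Vera-Gajardo \cite[Lemma 4.1]{StasinskiAlexander2019Rorg}, and your direct verification (homomorphism from additivity of $\phi$, injectivity from nondegeneracy since a nonzero $\mathbb{F}_q$-linear functional surjects onto $\mathbb{F}_q$, surjectivity by counting $|\Irr_K(\mathfrak{g})| = |\mathfrak{g}| = |\mathfrak{g}^*|$, and equivariance by unwinding the precomposition definitions of both actions) is exactly the standard argument that the cited reference uses, so your write-up simply makes explicit what the citation leaves implicit.
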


Note, there is also an isomorphism of $\mathfrak{g} \cong N$ which is $G$-invariant \cite [Lemma 2.3.]{StasinskiAlexander2019Rorg}. Thus, we can identify the characters of $N$ with characters of $\mathfrak{g}$. Given a character $\chi$ of $N$, we define $H_2$ and $H'_2$ to be the stabilizer of $\chi$ in $G_2$  and $G'_2$ respectively. Thus, we have the following lemma about the stabilizer of $e_\chi$.

\begin{lemma}
\label{lemma:stabilizer}
The stabilizer $H_2/N$ and $H'_2/N$ of $e_\chi$ are isomorphic.
\end{lemma}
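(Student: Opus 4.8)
The plan is to identify each quotient $H_2/N$ and $H'_2/N$ with a stabilizer of $\chi$ inside $\mathbb{G}(\mathbb{F}_q)$, and then to observe that the two relevant actions of $\mathbb{G}(\mathbb{F}_q)$ on $\Irr_K(\mathfrak{g})$ differ only by the Frobenius automorphism $\sigma$, which is a bijection. First I would record that $N$ lies in both $H_2$ and $H'_2$: since $N=\ker(\rho)$ acts on $\mathfrak{g}\cong N$ through its image in $\mathbb{G}(\mathbb{F}_q)$, which is trivial, every element of $N$ fixes $\chi$ under both $\cdot_1$ and $\cdot_2$. Consequently $H_2/N$ and $H'_2/N$ are well-defined subgroups of $G/N\cong\mathbb{G}(\mathbb{F}_q)$; writing $\bar g=\rho(g)$, they are precisely the stabilizers of $\chi$ for the $\cdot_1$-action and the $\cdot_2$-action respectively. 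One also checks that the stabilizer of the block $e_\chi$ coincides with the stabilizer of $\chi$: since $N$ is an abelian $p$-group we have $\chi(1)=1$, and conjugation by $h$ carries $e_\chi$ to $e_{{}^h\chi}$ by the defining formula for $e_\chi$.

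Next I would relate the two actions directly from the definitions. For $\bar g\in\mathbb{G}(\mathbb{F}_q)$ and $X\in\mathfrak{g}$ one has
$$\bar g\cdot_{2}X=\operatorname{Ad}(\sigma(\bar g))X=\sigma(\bar g)\cdot_{1}X,$$
and passing to characters exactly as in \Cref{lemma:Characters} yields $\bar g\cdot_{2}\chi=\sigma(\bar g)\cdot_{1}\chi$. Hence $\bar g$ stabilizes $\chi$ under $\cdot_2$ if and only if $\sigma(\bar g)$ stabilizes $\chi$ under $\cdot_1$, which gives
$$H'_2/N=\operatorname{Stab}^{\cdot_2}(\chi)=\sigma^{-1}\bigl(\operatorname{Stab}^{\cdot_1}(\chi)\bigr)=\sigma^{-1}(H_2/N).$$

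Finally, since $\sigma$ is an automorphism of the finite group $\mathbb{G}(\mathbb{F}_q)$, its restriction to $\sigma^{-1}(H_2/N)$ is a group isomorphism onto $H_2/N$, giving $H'_2/N\cong H_2/N$ as desired. The argument is elementary, and I do not anticipate a serious obstacle; the only point requiring genuine care is the \emph{bookkeeping} that places both stabilizers inside the common quotient $\mathbb{G}(\mathbb{F}_q)$ and matches the stabilizer of $e_\chi$ with that of $\chi$, so that the single invertible map $\sigma$ does all the work in interchanging the two actions.
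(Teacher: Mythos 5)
Your proof is correct and follows essentially the same route as the paper: identify the stabilizer of $e_\chi$ with the stabilizer of $\chi$, observe that the $\cdot_1$- and $\cdot_2$-actions differ by the automorphism $\sigma$, and conclude $H'_2/N=\sigma^{-1}(H_2/N)\cong H_2/N$. The only cosmetic difference is that the paper passes through $\beta\in\mathfrak{g}^*$ via \Cref{lemma:character2} while you work directly with $\chi\in\Irr(\mathfrak{g})$; your extra bookkeeping (that $N$ fixes $\chi$, so both quotients sit inside $\mathbb{G}(\mathbb{F}_q)$) is a welcome clarification rather than a departure.
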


\begin{proof}
Since $g \cdot e_\chi=e_{g \cdot \chi}$, the stabilizer of $e_\chi$ is the same as the stabilizer of $\chi$ \cite[Section 9]{navarro1998characters}. Thus, we compute the stabilizer of  $\chi$. By the discussion above, we can consider $\chi$ to be in $\Irr(\mathfrak{g})$, by \Cref{lemma:character2} we have that $\chi=\chi_{\beta}$  for some $\beta \in \mathfrak{g}^* $. Thus: 

$$H'_2/N=\{g \in \mathbb{G}(\mathbb{F}_{q})|\rho^{-1}(g) \cdot_{2}\beta=\beta \}=\{g \in \mathbb{G}(\mathbb{F}_{q})|\rho^{-1}(\sigma(g)) \cdot_{1}\beta=\beta \}=\sigma^{-1}(H_2/N).$$

Since the map $\sigma$ is automorphism, it follows that $H_2/N \cong H'_2/N.$

\end{proof}

\section{The isomorphism of $eKH_2$ and $eKH'_2$}

In this section,  we prove that there is an isomorphism of blocks of $eKH_2$ and $eKH'_2$, where $e=e_\chi$ for $\chi$ in $\Irr_K(N)$ such that $H_2$ and $H'_2$ are the stabilizer of $e$ in $G_2$ and $G'_2$ respectively. To prove this isomorphism, we take advantage of \Cref{proposition:Twisted}. In order to do so, we need to understand the cofactors $\alpha$ associated to $eKH_2$ and $eKH'_2$. We prove that the cofactor $\alpha$ is trivial in both cases. For this, we introduce projective ($K$)-representation of $G$ with factor set $\alpha$. By which, we mean a map
$X:G\to \mathbb{GL}_n(K)$ such that $X(x)X(y) = \alpha(xy)X(xy)$
for all $x,y$ in $G$ , where $\alpha(xy)$ is in $K^*$. In fact, one can check that $\alpha$ is in $Z^2(G;K^*)$, where we assume that $G$ acts on $K^*$ trivially. Similarly, one can define a projective representation as a $K^{\alpha}G$-module \cite{nagao2014representations}. We let $H$ denote either $H_2$ or $H'_2$. The projective representations of $H$ are closely related to Clifford theory. In fact, we can use \cref{proposition:Twisted} to show that there is a  projective representation $V$ of $H$ that extends $\chi$. By that we mean that $V$ viewed as $KN$-module is isomorphic to the $KN$-module associated to $\chi$. 

\vspace{5mm}

\begin{proposition}
\label{proposition:Projective}

 The $\alpha $, obtained from \cref{proposition:Twisted}, associated to the block $e_\chi$ of $KN$ gives rise to a projective representation of $H$ that extends $\chi$.
\end{proposition}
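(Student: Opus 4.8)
The plan is to build the projective representation directly from the elements $s_x \in S^*$ supplied by \Cref{proposition:Twisted}, where $S = KNe_\chi$ and $L = H/N$. Since $N$ is a $p$-group and $\operatorname{char} K = l \neq p$, the algebra $KN$ is semisimple and $e_\chi$ has defect zero; because $K$ is a splitting field, $S = KNe_\chi$ is therefore a full matrix algebra, so $S \cong \operatorname{End}_K(V)$ for the unique (up to isomorphism) simple $S$-module $V$, of dimension $\chi(1)$. Viewing $V$ as a $KN$-module through the surjection $KN \to KNe_\chi = S$, $x \mapsto xe_\chi$, it is precisely the simple $KN$-module affording $\chi$. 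Fixing an identification $S^* \cong \operatorname{GL}(V) = \operatorname{GL}_{\chi(1)}(K)$, I would then define $X \colon H \to \operatorname{GL}_{\chi(1)}(K)$ by letting $X(x)$ be the operator by which $s_x$ acts on $V$.

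First I would check that $X$ is a projective representation with factor set $\alpha$. This is immediate from the defining relation $s_x s_y = \alpha(x,y)\, s_{xy}$ of \Cref{proposition:Twisted}: letting both sides act on $V$ gives $X(x)X(y) = \alpha(x,y)\,X(xy)$ for all $x,y \in H$, which is exactly the condition that $X$ be a projective representation with factor set $\alpha \in Z^2(H;K^*)$; since $\alpha$ is inflated from $L = H/N$, this is the cocycle furnished by the proposition.

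Next I would verify that $X$ extends $\chi$, i.e. that $X|_N$ affords $\chi$. The point is to use the normalization allowed by \Cref{proposition:Twisted}: for $x \in N$ one may take $s_x = xe_\chi$, since this is a unit of $S$ implementing conjugation by $x$ (indeed $(xe_\chi)\,t\,(xe_\chi)^{-1} = xtx^{-1}$ for $t \in S$) and satisfies $s_x s_y = s_{xy}$ on $N$; choosing coset representatives $y_i$ of $L$ and setting $s_x = (ne_\chi)\,s_{y_i}$ for $x = n y_i$ then extends this consistently to all of $H$, with the two compatibility checks ($x \in N$ and $y \in N$) following from the fact that $s_{y_i}$ implements conjugation by $y_i$. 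With this choice, for $x \in N$ the operator $X(x)$ is the action of $xe_\chi$ on $V$, so $X|_N$ is the representation of $N$ on $V$ afforded by $\chi$, as required.

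I expect the only genuine subtlety to be exactly this normalization on $N$. By Skolem--Noether each $s_x$ implementing a given inner automorphism of $S$ is determined only up to a scalar in $Z(S)^* = K^*$, and an inconsistent choice on $N$ would replace $X|_N$ by a twist $\lambda\cdot\chi$ for some linear character $\lambda \colon N \to K^*$; indeed the conditions of \Cref{proposition:Twisted} alone are satisfied by both $s_x = xe_\chi$ and $s_x = \lambda(x)\,xe_\chi$, so they do not by themselves pin down the restriction. The content of the argument is therefore to observe that one can, and does, take $s_x = xe_\chi$ for $x \in N$, forcing $\lambda = 1$; the remaining verifications are the routine bookkeeping that the coset-representative construction is well defined and that $X$ so obtained is a genuine projective extension of $\chi$ rather than of a twist of it.
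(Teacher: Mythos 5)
Your proposal is correct and follows essentially the same route as the paper's proof: both identify $S = KNe_\chi$ with a matrix algebra acting on the simple module $V$ affording $\chi$, and build the projective representation by letting the elements $s_x$ of \Cref{proposition:Twisted} act on $V$, normalized so that on $N$ (via coset representatives) the operator is just that of $ne_\chi$, i.e.\ the representation $Y$ of $\chi$ itself. Your explicit discussion of the Skolem--Noether scalar ambiguity is a point the paper leaves implicit in its ``one can check'' step, but it is the same construction.
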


\begin{proof}
Let $Y$ be the representation associated to $\chi$, i.e $Y:N \to \mathrm{GL}_{n}(K)$. Let $R$ be a set of representative of $L=H/N$ in $H$ and $S=KNe$. For each $x \in R$, by \cref{proposition:Twisted}, there is a $s_x \in S^*$ such that $xtx^{-1} = s_xt (s_x )^{-1}$ for all $t \in S$. Note also since $e$ is a $H$-stable block of defect zero of $KN$, then $S = KNe\cong \mathrm{M}_{n}(K)$. With abuse of notation, we can think of $s_x$ as an element of $\mathrm{GL}_{n}(K)$. Now define $X$ to be map $X:H \to \mathrm{GL}_{n}(K)$ such that for each $h=xn \in H$, we have $X(h)=s_xY(n)$ with $x \in R$ and $n \in N$. One can check that $X$ as defined above is a projective representation of $H$ with factor set  $\alpha$, in $Z^2(L;K^*)$ that extends $Y$.   
\end{proof}

In order to understand $\alpha$ from \cref{proposition:Twisted} which is associated to a projective ($K$)-representation of $G$, we study the projective representation of $H$ with factor set $\alpha$ that extends $\chi$ over a field of characteristic zero. To relate them, we need an $l$-modular system. By this we mean a triple $(F;R; K)$ where $F$ is a field of characteristic zero equipped with a discrete valuation, $R$ is the valuation ring in $F$ with maximal ideal $(\pi)$, and $K = R/(\pi)$ is the residue field of $R$, which is required to have characteristic $l$. If both $F$ and $K$ are splitting fields for $G$ we say that the triple is a splitting $l$-modular system for $G$. Note, we need an $l$-modular system so we can relate representation over a field $F$ of characteristic zero to representation over a field $K$ of characteristic $l$. The following lemma shows that given a projective representation over $F$, we can obtain a projective representation over $R$. Notice this lemma is just a generalization of the already known fact over group algebras that can be extended to hold over twisted group algebras. In fact the prove is the same, see \cite[Lemma 2.2.2] {schneider2012modular}. 

\vspace{5mm}

\begin{lemma}
\label{lemma:CDE}
Let $G$ be a finite group and  $M$ be a $F^\alpha G$-module  with $\alpha \in Z^2(G;R^*)$  then $M$ contains a lattice $L$ that is a  $R^\alpha G-module$. 
\end{lemma}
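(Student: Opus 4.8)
The plan is to mimic the classical argument producing an integral form (a lattice) inside a representation over the fraction field, and to check that the twisting cocycle $\alpha$ causes no extra trouble. So first I would recall the standard construction for ordinary modules: given a finite group $G$ and an $FG$-module $M$ of dimension $n$, one picks any $R$-lattice $L_0$ in the underlying $F$-vector space $M$ — for instance the $R$-span of an $F$-basis — and then averages it over the group to obtain a $G$-stable lattice. In the ordinary case one sets $L=\sum_{g\in G} g\cdot L_0$, which is finitely generated over the Noetherian ring $R$, hence an $R$-lattice, and is visibly $G$-stable, hence an $RG$-module.

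Now I would adapt this to the twisted setting. The key observation is that $M$ is an $F^{\alpha}G$-module with $\alpha\in Z^2(G;R^*)$, so the structure maps are given by the action of the basis elements $\{u_g\}_{g\in G}$ of $F^{\alpha}G$ satisfying $u_g u_h=\alpha(g,h)u_{gh}$. Because we have assumed the cocycle takes values in the \emph{units $R^{*}$} of $R$ (not merely in $F^{*}$), each $u_g$ acts on $M$ by an $F$-linear automorphism, and the subring of $\operatorname{End}_F(M)$ generated over $R$ by the $u_g$ is an $R^{\alpha}G$-stable order. Concretely I would again choose an $R$-lattice $L_0\subset M$ and form
$$
L=\sum_{g\in G} u_g\cdot L_0.
$$
This is a finite sum of $R$-lattices inside the finite-dimensional $F$-space $M$, hence again a finitely generated $R$-module that spans $M$ over $F$ and contains no $F$-line — i.e. an $R$-lattice. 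The point requiring the hypothesis $\alpha\in Z^2(G;R^*)$ is the stability check: for any $h\in G$,
$$
u_h\cdot L=\sum_{g\in G} u_h u_g\cdot L_0=\sum_{g\in G}\alpha(h,g)\,u_{hg}\cdot L_0,
$$
and since each scalar $\alpha(h,g)$ lies in $R^{*}$ it preserves the lattice $u_{hg}\cdot L_0=u_{hg}R\text{-span}$, so reindexing by $g'=hg$ shows $u_h\cdot L=L$. Thus $L$ is stable under every basis element $u_h$ and under multiplication by $R$, making it an $R^{\alpha}G$-module as required.

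The only genuine subtlety — and the step I would flag as the main point to get right — is precisely this integrality of the cocycle values: if $\alpha$ were merely $F^{*}$-valued, the scalars $\alpha(h,g)$ could have negative valuation and would not preserve $L_0$, so the averaged module need not be an $R$-lattice. Under the stated hypothesis $\alpha\in Z^2(G;R^*)$ this obstruction disappears, and the argument is otherwise identical to the untwisted case recorded in \cite[Lemma 2.2.2]{schneider2012modular}. I would close by remarking that $L$ is finitely generated and torsion-free over the discrete valuation (hence principal ideal) ring $R$, so it is automatically $R$-free of rank $n=\dim_F M$, which is the usual meaning of ``lattice'' here.
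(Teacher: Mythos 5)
Your proposal is correct and follows essentially the same route as the paper: both average an $R$-span $L_0$ of an $F$-basis over the group, $L=\sum_{g\in G} u_g\cdot L_0$, and then verify that $L$ is finitely generated, spans $M$, and is stable under the twisted action. In fact your write-up is slightly more complete than the paper's, since you carry out the stability computation $u_h u_g\cdot L_0=\alpha(h,g)\,u_{hg}\cdot L_0$ explicitly and pinpoint that $\alpha$ taking values in $R^{*}$ (not just $F^{*}$) is exactly what makes the averaged module remain a lattice, a point the paper asserts without detail.
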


\begin{proof}
Note by \cite[Lemma 2.2.1] {schneider2012modular}  to show that $L$ is a lattice of $M$ it is enough to show that $L$
is finitely generated as an $R$-module and $L$ generates $M$ as a $F$-vector space. Thus, pick a $F$ basis $e_1,..., e_n$ of M then let $L':=Re_1+...+Re_n$ a lattice of M. Let $L:=\sum\limits_{g \in G}gL'$ then $L$ is a  $R^\alpha G-module$. Note $L$ is finitely generated as an $R$-module by $\{ ge_i : 1 \leq i \leq n,g \in G \}$ and it also generates $M$ as vector space. Thus L is a $G$-invariant lattice. 
\end{proof}

Before we introduce the following theorem, recall that a block of defect zero may be defined as a matrix algebra. Moreover, by the following \Cref{proposition:Webb} such a block is a ring summand of $KG$ which has a projective simple module.

\vspace{5mm}

\begin{proposition}
\label{proposition:Webb}
(Theorem 9.6.1, Webb \cite{webb2016course}). Let $(F;R; K)$ be a splitting $p$-modular system in which $R$ is complete, and let $G$ be a group of order $p^dq$ where $q$ is prime to $p$. Let $T$ be an $FG$-module of dimension $n$, containing a full RG-sublattice $T_0$. The following are equivalent:
\begin{enumerate}
    \item $p^d|n$ and $T$ is a simple $FG$-module.
    \item The homomorphism $RG \to \mathrm{End}_{R}(T_0)$ that gives the action of $RG$ on $T_0$ identifies $\mathrm{End}_{R}(T_0) \cong \mathrm{M}_{n}(R)$ with a ring direct summand of $RG$.
    \item  $T$ is a simple $FG$-module and $T_0$ is a projective $RG$-module.
    \item The homomorphism $KG \to \mathrm{End}_{K}(T_0/ \pi T_0)$ identifies $\mathrm{End}_{K}(T_0/ \pi T_0)\cong \mathrm{M}_{n}(K)$ with a ring direct summand of $KG$.
    \item As a $KG$-module, $T_0/ \pi T_0$ is simple and projective. 
\end{enumerate}

\end{proposition}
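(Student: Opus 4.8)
The plan is to prove the cyclic chain $(1)\Rightarrow(3)\Rightarrow(5)\Rightarrow(4)\Rightarrow(2)\Rightarrow(1)$, relying on three standard tools: Maschke's theorem (so $FG$ is semisimple and, $F$ being a splitting field, a simple $FG$-module $T$ is absolutely simple with $\operatorname{End}_{FG}(T)=F$); Higman's relative trace criterion for projectivity; and the lifting of idempotents and matrix units along $RG\twoheadrightarrow KG$, available because $R$ is complete. Throughout I write $|G|=p^{d}q$ and use that $q\in R^{\times}$, since $q$ is prime to $p$ and $R$ has residue characteristic $p$. The numerical fact that drives everything is that $p^{d}\mid n$ forces $n/|G|=(n/p^{d})q^{-1}\in R$. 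I will also use repeatedly that a projective $KG$-module has $K$-dimension divisible by $p^{d}$, since it restricts to a free module over a Sylow $p$-subgroup $P$ (the index $[G:P]$ being invertible in $K$), and $KP$ is local.

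The crux is $(1)\Rightarrow(3)$: the divisibility $p^{d}\mid n$ forces the lattice $T_{0}$ to be $RG$-projective. Let $\rho\colon G\to\operatorname{GL}(T_{0})$ be the representation, and for $\phi\in\operatorname{End}_{R}(T_{0})$ form the relative trace $\operatorname{Tr}^{G}_{1}(\phi)=\sum_{g\in G}\rho(g)\phi\rho(g)^{-1}$, which preserves $T_{0}$ and is an $FG$-endomorphism of $T$. Because $T$ is absolutely simple, $\operatorname{Tr}^{G}_{1}(\phi)=c\cdot\operatorname{id}$ for some $c\in F$, and comparing ordinary traces gives $c=|G|\operatorname{tr}(\phi)/n$. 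Since $\operatorname{tr}\colon\operatorname{End}_{R}(T_{0})\to R$ is surjective ($T_{0}$ being $R$-free of rank $n$) and $n/|G|\in R$, I may choose $\phi$ with $\operatorname{tr}(\phi)=n/|G|$, so that $\operatorname{Tr}^{G}_{1}(\phi)=\operatorname{id}_{T_{0}}$; Higman's criterion then yields that $T_{0}$ is $RG$-projective, which is $(3)$. I expect this trace computation, together with the verification $n/|G|\in R$, to be the main obstacle; the other implications are bookkeeping around reduction modulo $\pi$.

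For $(3)\Rightarrow(5)$ I reduce modulo $\pi$. A projective $RG$-lattice reduces to a projective $KG$-module, and since $\operatorname{End}_{RG}(T_{0})=R$ is local, $T_{0}$ is indecomposable, so $T_{0}/\pi T_{0}$ is an indecomposable projective $KG$-module, i.e. a $P(S)$. Now the projectivity of $T_{0}$ already gives $p^{d}\mid n$, hence the central primitive idempotent $e_{T}=\tfrac{n}{|G|}\sum_{g\in G}\chi_{T}(g^{-1})\,g$ of $FG$ lies in $RG$ (its coefficients are $(n/p^{d})q^{-1}\chi_{T}(g^{-1})\in R$). The corresponding block therefore contains the single ordinary character $\chi_{T}$, so it contains a single Brauer character $S$ with some decomposition number $d\ge 1$; comparing $\dim_{K}P(S)=d\,n$ with $\dim_{K}(T_{0}/\pi T_{0})=n$ forces $d=1$, whence $\dim_{K}S=n$ and $T_{0}/\pi T_{0}\cong S$ is simple, giving $(5)$. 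For $(5)\Rightarrow(4)$: a simple projective $KG$-module generates a block on which $KG$ acts through all of $\operatorname{End}_{K}(T_{0}/\pi T_{0})$, so this matrix algebra $\operatorname{M}_{n}(K)$ is the asserted ring direct summand of $KG$.

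For $(4)\Rightarrow(2)$ I lift the central idempotent of the summand, and then a full set of matrix units, from $KG$ to $RG$ using completeness of $R$; the resulting block $RG\,e$ is an $R$-order with $RG\,e/\pi\,RG\,e\cong\operatorname{M}_{n}(K)$ and $F\otimes_{R}RG\,e\cong\operatorname{M}_{n}(F)$, hence $RG\,e\cong\operatorname{M}_{n}(R)\cong\operatorname{End}_{R}(T_{0})$, a ring direct summand of $RG$. Finally $(2)\Rightarrow(1)$ closes the cycle: tensoring the summand $\operatorname{M}_{n}(R)$ with $F$ exhibits $\operatorname{M}_{n}(F)$ as a two-sided summand of $FG$, so $T$ is a simple $FG$-module; reducing modulo $\pi$ exhibits $\operatorname{M}_{n}(K)$ as a summand of $KG$, which as a left module is a direct sum of $n$ copies of a projective $KG$-module of dimension $n$, so $p^{d}\mid n$ by the Sylow-restriction remark. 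This establishes the equivalence of all five conditions.
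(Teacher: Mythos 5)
This statement is not proved in the paper at all: it is quoted verbatim as background, with a citation to Webb's book (Theorem 9.6.1), and the paper only \emph{uses} it (in \Cref{theorem:TCDE}) to say that a defect-zero block has a unique simple module lifting uniquely to characteristic zero. So there is no internal argument to compare against; what you have written is a genuine proof of a result the paper treats as a black box. Your proof is correct, and it is essentially the classical textbook argument. The real content is exactly where you located it: the implication $(1)\Rightarrow(3)$, where $p^{d}\mid n$ gives $n/|G|\in R$, the relative trace of an endomorphism of trace $n/|G|$ is forced by absolute simplicity to be $\operatorname{id}_{T_{0}}$, and Higman's criterion (valid over $R$, since $T_{0}$ is $R$-free so $RG\otimes_{R}T_{0}$ is $RG$-free) yields projectivity. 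The remaining implications are right but lean on several standard facts that you assert rather than prove, and it is worth being aware of what they are: in $(3)\Rightarrow(5)$ you use that $e_{T}\in RG$ is then a block idempotent of $RG$, that the number of irreducible Brauer characters in a block is at most the number of ordinary ones (linear independence of the columns of the decomposition matrix), and the formula $\dim_{K}P(S)=\sum_{\chi}d_{\chi S}\chi(1)$; in $(5)\Rightarrow(4)$ you use that a simple projective module is alone in its block (Cartan-matrix linkage); in $(4)\Rightarrow(2)$ you use lifting of matrix units along $RG\twoheadrightarrow KG$ and a Nakayama argument to identify the lifted order with $\operatorname{End}_{R}(T_{0})$ via the action map (note one small point you elide: the lifted central idempotent $e$ acts as the identity on $T_{0}$, which follows since $(1-e)T_{0}$ is a direct summand of $T_{0}$ contained in $\pi T_{0}$, hence zero by Nakayama). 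None of these hidden inputs is circular --- each is established in the standard references independently of the defect-zero theorem --- so the argument stands as a correct, self-contained replacement for the citation.
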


 Most importantly, by \Cref{proposition:Webb} a block of defect zero can have only one simple module and there is a unique ordinary simple module that reduces to it. This is used in the proof of the following theorem.

\vspace{5mm}

\begin{theorem}
\label{theorem:TCDE}

Given a $H$-stable block $e_\chi$ of defect zero of $KN$ where $N \trianglelefteq H$. Let $V$ be the unique ordinary simple module associated to this block. According to \Cref{proposition:Projective}, let $\hat{V}$ be the projective representation of $H$ that extends $V$, with cofactor $\hat{\alpha}$  in $Z^2(H/N;R^*)$. Any H-invariant lattice of $\hat{V}$ call it $L$, gives rises to a $K^{{\alpha}} H$-module that extends the simple projective $KN$-module associated to $e_\chi$, where $\alpha=\hat{\alpha} \mod{(\pi)}$.

\end{theorem}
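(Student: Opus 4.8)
The plan is to lift the whole situation to the complete discrete valuation ring $R$, choose an $H$-invariant lattice, and reduce modulo $\pi$, checking that the reduction restricts correctly to $N$. First I would record the integral structure of the defect-zero block. Since $R$ is complete, the block idempotent $e_\chi$ of $KN$ lifts uniquely to a central idempotent $\hat e$ of $RN$, and defect zero gives $RN\hat e \cong \mathrm{M}_n(R)$ with $n = \dim_F V$; base change to $F$ yields $FN\hat e \cong \mathrm{M}_n(F)$, whose unique simple module is the ordinary module $V$, while reduction modulo $\pi$ recovers $e_\chi KN \cong \mathrm{M}_n(K)$ with unique simple projective module $W$, the one associated to $e_\chi$. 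By \Cref{proposition:Webb}, $V$ carries a full $RN$-lattice $V_0$, projective over $RN$, with $V_0/\pi V_0 \cong W$; moreover $V_0$ is, up to isomorphism, the \emph{only} full $RN$-lattice of $V$ lying in this block, because over the local ring $R$ every finitely generated torsion-free module over $RN\hat e \cong \mathrm{M}_n(R)$ of $F$-dimension $n$ is isomorphic to the standard lattice $R^n$ (Morita equivalence with $R$).

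Next I would produce the integral projective representation by applying the construction of \Cref{proposition:Projective} to $\hat e$ in place of $e_\chi$. For each $x \in H$, conjugation by $x$ is an $R$-algebra automorphism of $RN\hat e \cong \mathrm{M}_n(R)$, and hence inner, implemented by some $s_x \in \mathrm{GL}_n(R)$ (automorphisms of $\mathrm{M}_n(R)$ over the local ring $R$ are inner, since finitely generated projective $R$-modules are free). Choosing the $s_x$ to depend only on the coset $xN$ and, for $x \in N$, to be the action of $x\hat e$ itself, the assignment endows $\hat V := V$ with a projective $H$-action extending $V$; its cofactor $\hat\alpha(x,y) = s_x s_y s_{xy}^{-1}$ is a central unit of $\mathrm{M}_n(R)$, so $\hat\alpha \in Z^2(H/N; R^*)$. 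This is exactly the hypothesis of \Cref{lemma:CDE}, so $\hat V$, regarded as an $F^{\hat\alpha} H$-module, contains an $H$-invariant lattice $L$ that is an $R^{\hat\alpha} H$-module.

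Then I would reduce modulo $\pi$. The surjection $R \to K$ induces a homomorphism of twisted group algebras $R^{\hat\alpha} H \to K^{\alpha} H$ with $\alpha = \hat\alpha \bmod (\pi)$, so $\bar L := L/\pi L$ is naturally a $K^{\alpha} H$-module. It remains to identify $\bar L|_N$. Restricting the $R^{\hat\alpha} H$-action to $N$ (where $\hat\alpha$ is trivial) makes $L$ an honest $RN$-lattice inside $\hat V|_N \cong V$; since $V$ lies in the block and $L = \hat e L$, it is a full $RN\hat e$-lattice of $F$-dimension $n$, whence $L|_N \cong V_0$ by the uniqueness recorded above. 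Therefore $\bar L|_N = L|_N / \pi L|_N \cong V_0/\pi V_0 \cong W$, which is precisely the statement that the $K^{\alpha} H$-module $\bar L$ extends the simple projective $KN$-module associated to $e_\chi$.

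I expect the main obstacle to be the lattice-uniqueness step: verifying that the a priori arbitrary $H$-invariant lattice $L$ restricts on $N$ to the specific projective lattice $V_0$ that reduces to $W$. This is where the defect-zero hypothesis is essential, since it forces $RN\hat e$ to be a matrix algebra over the local ring $R$, so that all full lattices of the simple module $V$ inside the block are isomorphic and reduce to the unique simple $KN$-module of $e_\chi$. Without defect zero the restricted lattice need not be projective, and its reduction could fail to be simple, so the argument would break down at exactly this point.
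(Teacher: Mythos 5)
Your proof is correct, and it follows the paper's overall skeleton --- produce an integral projective representation, take an $H$-invariant lattice via \Cref{lemma:CDE}, reduce modulo $(\pi)$, and identify the restriction to $N$ using defect zero --- but two steps are executed genuinely differently. First, you construct the $R$-form explicitly, lifting $e_\chi$ to a block $\hat e$ of $RN$ with $RN\hat e\cong \mathrm{M}_n(R)$ and using that $R$-algebra automorphisms of $\mathrm{M}_n(R)$ are inner over the local ring $R$; the paper instead takes $\hat V$ with cofactor valued in $R^*$ as handed to it by \Cref{proposition:Projective}, even though that proposition naturally produces a cocycle valued in a field, so your construction supplies an integrality justification for $\hat\alpha$ that the paper leaves implicit (one wording slip: the $s_x$ cannot literally ``depend only on the coset $xN$,'' since for $x\in N$ they must implement conjugation by $x$; the standard choice you clearly intend is $s_{xn}=s_x\cdot(n\hat e)$ for coset representatives $x$, which makes the cocycle, not the $s_x$, descend to $H/N$). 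Second, where you identify $\bar{L}{\downarrow}^H_N$ by proving that all full $RN\hat e$-lattices of $V$ are isomorphic (Morita equivalence with $R$) and then reducing the standard one, the paper argues more directly: condition (1) of \Cref{proposition:Webb} refers only to the simple module $V$, so the equivalent condition (5) --- simplicity and projectivity of the reduction --- holds for \emph{any} full sublattice, in particular for $L{\downarrow}^H_N$, and uniqueness of the simple module in a defect-zero block then pins down the isomorphism class. Consequently, the ``main obstacle'' you flag at the end, lattice uniqueness, is sound mathematics but dispensable: Webb's equivalence, which you cite anyway, already applies to an arbitrary full lattice. Your route buys explicit integral control of the cocycle; the paper's buys a shorter reduction step.
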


\begin{proof}
Let $\hat{V}$ be as above so $\hat{V}$ is a $F^{\hat{\alpha}} H$-module such that  $\hat{V}{\downarrow}^H_N \simeq V$ as a $FN$-module. Now by \Cref{lemma:CDE} take $L$ to be an H-invariant lattice of $\hat{V}$ then consider its reduction to a $K^{{\alpha}} H$-module, call it $\bar{L}:=K \otimes_R L$. Note that $L$ is also an N-invariant lattice of $V$, the unique simple ordinary module associated to the block $e_\chi$. Thus, by \Cref{proposition:Webb} the reduction of $L$ is a simple module of $KN$. Therefore, $\bar{L}{\downarrow}^H_N$ is isomorphic to the simple module associated to this block. One can conclude that $\bar{L}$ is a $K^{{\alpha}} H$-module that extends the simple projective module associated to $e_\chi$, where $\alpha=\hat{\alpha} \mod{(\pi)}$.
\end{proof}

\vspace{5mm}

 \begin{proposition}
 \label{proposition:alpha}
  The $\alpha$ obtained from \Cref{theorem:TCDE} is trivial.
 \end{proposition}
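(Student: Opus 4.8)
The plan is to push the question down to characteristic zero, where triviality of the factor set becomes the statement that the linear character $\chi$ extends to its inertia group $H$, and then to transport this back to characteristic $\ell$ by means of \Cref{theorem:TCDE}.

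First I would invoke \Cref{theorem:TCDE}: we have $\alpha = \hat\alpha \bmod (\pi)$, where $\hat\alpha \in Z^2(H/N; R^*)$ is the factor set of the projective $F^{\hat\alpha}H$-module $\hat V$ that extends the ordinary simple $FN$-module $V$ attached to $e_\chi$. Reduction modulo $(\pi)$ is induced by the group homomorphism $R^* \to K^*$, so it sends the class $[\hat\alpha] \in H^2(H/N; R^*)$ to $[\alpha] \in H^2(H/N; K^*)$, carrying coboundaries to coboundaries. Hence it suffices to show $[\hat\alpha]$ is trivial, i.e. that $V$ extends to an \emph{honest} $FH$-module. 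Concretely, given such an ordinary extension $\tilde V$, an $H$-invariant $R$-lattice $\tilde L \subset \tilde V$ reduces modulo $(\pi)$ to an ordinary $KH$-module $\bar{\tilde L}$; by \Cref{proposition:Webb} its restriction to $N$ is the simple projective $KN$-module of $e_\chi$. Thus $\bar{\tilde L}$ is an \emph{untwisted} $KH$-module extending $e_\chi$, which is exactly the assertion that $\alpha$ is trivial.

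It remains to extend, in characteristic zero, the $H$-invariant linear character $\chi$ of the abelian normal subgroup $N$. For $G_2 = \mathbb{G}(\mathbb{F}_q[t]/t^2)$ this is immediate: the ring inclusion $\mathbb{F}_q \hookrightarrow \mathbb{F}_q[t]/t^2$ splits $\rho$, so $H_2 = N \rtimes (H_2/N)$, and $H_2$-invariance makes $n\,s(\bar h) \mapsto \chi(n)$ a homomorphism $H_2 \to F^*$ extending $\chi$. For $G'_2 = \mathbb{G}(W_2(\mathbb{F}_q))$ no such section exists --- the Teichm\"uller lift is multiplicative but not additive, so $1 \to N \to G'_2 \to \mathbb{G}(\mathbb{F}_q) \to 1$ is genuinely non-split --- and here I would invoke the characteristic-zero results of Stasinski and Vera-Gajardo, which show that $\chi$ still extends to its inertia group $H'_2$. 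By \Cref{lemma:stabilizer} the two inertia groups share the quotient $H_2/N \cong H'_2/N$, so the conclusion holds on both sides.

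The main obstacle is exactly the non-split case $G'_2$. The obstruction to extending $\chi$ is the push-forward $\chi_*(\omega) \in H^2(H/N; F^*)$ of the class $\omega \in H^2(H/N; N)$ of the group extension; for $G_2$ the splitting gives $\omega = 0$ and hence $\chi_*(\omega)=0$ for free, whereas for $G'_2$ the class $\omega$ is nonzero and one must verify that its image under $\chi$ nevertheless vanishes. This is precisely the content provided by the characteristic-zero analysis; once it is available, the modular reduction of the second paragraph is routine.
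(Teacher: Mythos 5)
Your proposal is correct and follows essentially the same route as the paper: reduce modulo $(\pi)$ via \Cref{theorem:TCDE}, then deduce triviality of $\hat{\alpha}$ from the characteristic-zero extension theorem of Stasinski and Vera-Gajardo (their Proposition 4.5), which is exactly the citation the paper uses. Your only deviation --- handling the split case $G_2$ by an explicit semidirect-product extension of $\chi$ --- is a self-contained shortcut the paper does not need, since the cited result covers both rings at once.
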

 
 
 \begin{proof}
 Note by work of Stasinski and Vera-Gajardo, it was proven that any $\chi$ element of  $\Irr_F(N)$ extends to it is inertia group $H$ \cite[Proposition 4.5]{StasinskiAlexander2019Rorg}. Thus by \Cref{proposition:Projective}, the cofactor associated to the extension of $\chi$ is trivial over $F$, i.e. $\hat{\alpha}=1$. By \Cref{theorem:TCDE}, there is a $K^{{\alpha}} H$-module $\bar{L}$ that extends the simple projective module associated to $e_\chi$, where $\alpha=\hat{\alpha} \mod{(\pi)}=1$. Thus $\alpha$ is trivial.
 \end{proof}
 
 \vspace{5mm}
 
 We recall the following result from \Cref{lemma:stabilizer} that the stabilizer $H_2$ and $H'_2$ of $\chi$ are isomorphic mod $N$ i.e. $H_2/N \simeq H'_2/N$.
 We will denote $L:=H_2/N \simeq H'_2/N$ for this quotient.

\vspace{5mm}

\begin{theorem}
\label{theorem:Stabilizer}
The following $K$-Algebras are isomorphic $eKH_2$ and $eKH'_2$. 
\end{theorem}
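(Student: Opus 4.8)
The plan is to apply \Cref{proposition:Twisted} separately to the pair $(H_2,N)$ and to the pair $(H'_2,N)$, and then to match up the two resulting descriptions. In both situations the normal subgroup is the \emph{same} abelian $p$-group $N\cong\mathfrak{g}$ and the block $e=e_\chi$ is the same $H$-stable block; the only data that differ are the quotients $L_2=H_2/N$ and $L'_2=H'_2/N$ and the associated $2$-cocycles. The strategy is to show that each block algebra collapses to an ordinary (untwisted) group algebra of a common group, after which the isomorphism is immediate. A pleasant feature is that the two cocycles never need to be compared directly: it suffices to trivialize each one on its own.

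First I would record that, since $N$ is an abelian $p$-group and $K$ has characteristic $l\neq p$, the algebra $KN$ is semisimple and split, so each block $e_\chi KN$ is a $1\times 1$ matrix algebra; that is, $S:=eKN\cong K$. In particular $e$ has defect zero, so \Cref{proposition:Twisted} applies to $(H_2,N,e)$ and to $(H'_2,N,e)$, giving
$$eKH_2\cong S\otimes_K K^{(\alpha_2)^{-1}}L_2\cong K^{(\alpha_2)^{-1}}L_2, \qquad eKH'_2\cong S\otimes_K K^{(\alpha'_2)^{-1}}L'_2\cong K^{(\alpha'_2)^{-1}}L'_2,$$
where $\alpha_2\in Z^2(L_2;K^*)$ and $\alpha'_2\in Z^2(L'_2;K^*)$ are the cocycles produced by the proposition and the final isomorphism on each side uses $S\cong K$.

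Next I would eliminate the twisting. By \Cref{proposition:alpha} the cocycle attached to the extension of $\chi$ to its inertia group is trivial in both cases, i.e. $\alpha_2$ and $\alpha'_2$ are coboundaries. Using the fact recalled in Section~2 that $K^{\alpha}G\cong K^{\beta}G$ as $G$-graded algebras exactly when $\alpha$ and $\beta$ agree in $H^2(G;K^*)$, this gives $K^{(\alpha_2)^{-1}}L_2\cong KL_2$ and $K^{(\alpha'_2)^{-1}}L'_2\cong KL'_2$, hence $eKH_2\cong KL_2$ and $eKH'_2\cong KL'_2$. Finally, \Cref{lemma:stabilizer} supplies a group isomorphism $L_2=H_2/N\cong H'_2/N=L'_2$ (induced by the Frobenius $\sigma$), which functorially yields $KL_2\cong KL'_2$. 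Composing the three isomorphisms produces $eKH_2\cong eKH'_2$.

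Since the representation-theoretic substance is inherited from the earlier results, I do not expect a genuine obstacle here; the work is chiefly bookkeeping. The one point demanding care is the \emph{uniform} applicability of \Cref{proposition:alpha} to both groups: one must check that the extendibility of $\chi$ to its inertia subgroup holds for $H_2$ and $H'_2$ alike and that the reduction $\alpha=\hat\alpha \bmod(\pi)$ of \Cref{theorem:TCDE} is therefore trivial in each case. Once that uniform triviality is in hand, the identification $S\cong K$ together with the group isomorphism $L_2\cong L'_2$ finishes the argument.
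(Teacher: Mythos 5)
Your proof is correct and takes essentially the same route as the paper: apply \Cref{proposition:Twisted} to both pairs $(H_2,N,e)$ and $(H'_2,N,e)$, trivialize both cocycles via \Cref{proposition:alpha} (whose statement in the paper already covers $H$ denoting either stabilizer, so the uniformity you flag is built in), and identify the quotients via \Cref{lemma:stabilizer}. Your extra observation that $S=eKN\cong K$ since $N$ is abelian is valid but unnecessary; the paper simply carries $eKN$ along as a common tensor factor in $eKH_2\cong eKN\otimes_K KL\cong eKH'_2$.
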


\begin{proof}
We can apply \Cref{proposition:Twisted}, since $e$ is $H$-stable block of defect zero of $KN$. Thus, we have an isomorphism of $K$-algebras $$\Phi: eKH_2\cong eKN \otimes_K KL \cong eKH'_2$$ since by \Cref{proposition:alpha}, $\alpha^{-1}$ is trivial.  
\end{proof}

 \section{The isomorphism of $KG_2$ and $KG'_2$}

The following results about interior $G$-algebras will be useful in order to prove the isomorphism of those two group algebras. Note first that a $G$-algebra over a field $K$ is an $K$-algebra $A$ together with an action of $G$ on $A$ by $K$-algebra automorphisms. An interior $G$-algebra is a $G$-algebra where the action of $G$ is given by inner automorphism. The example to keep in mind is that $KG$ and $bKG$ are interior $G$-algebra with $G$ acting by conjugation, where $b \in Bl(G)$. 

Given $H$ a subgroup of $G$ and $B$ an interior $H$-algebra, we define $\mathrm{Ind^G_H}(B)$ to be the $K$-module $KG \otimes_{KH} B \otimes_{KH} KG$ and one can put an interior $G$-algebra structure on $\mathrm{Ind^G_H}(B)$. For more details on the interior $G$-algebra structure on $\mathrm{Ind^G_H}(B)$ one can consult Th{\'e}venaz's book on $G$-algebras and modular representation theory \cite{thevenaz1995g}. In fact, the following lemma shows that as a $K$-algebra one can think of $\mathrm{Ind^G_H}(B)$ as a matrix algebra over $B$.

\vspace{5mm}

\begin{lemma}
\label{lemma:Thevenaz} (Lemma 16.1, Th{\'e}venaz  \cite{thevenaz1995g}) Let $H$ be a subgroup of $G$ of index n, and $B$ be an interior $H$-algebra. Then we have $\mathrm{Ind^G_H}(B) \cong \mathrm{M}_{n}(B)$ as $K$-algebras.
\end{lemma}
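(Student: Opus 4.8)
The plan is to prove the isomorphism concretely by exhibiting a system of matrix units inside $\mathrm{Ind^G_H}(B)$. First I would fix a complete set $g_1,\dots,g_n$ of left coset representatives of $H$ in $G$, so that $G=\bigsqcup_i g_iH$ and, equivalently, $G=\bigsqcup_i Hg_i^{-1}$. Viewing $KG$ as a free right $KH$-module on the basis $\{g_i\}$ and as a free left $KH$-module on the basis $\{g_i^{-1}\}$, the defining tensor product decomposes as a $K$-module into
$$\mathrm{Ind^G_H}(B)=KG\otimes_{KH}B\otimes_{KH}KG=\bigoplus_{i,j} g_i\otimes B\otimes g_j^{-1},$$
where each summand $g_i\otimes B\otimes g_j^{-1}$ is isomorphic to $B$ via $b\mapsto g_i\otimes b\otimes g_j^{-1}$. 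This already shows $\mathrm{Ind^G_H}(B)\cong B^{n^2}$ as a $K$-module, matching $\mathrm{M}_{n}(B)$; the content of the lemma is to match the multiplications.

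Next I would define the candidate isomorphism $\Phi:\mathrm{Ind^G_H}(B)\to \mathrm{M}_{n}(B)$ on summands by $\Phi(g_i\otimes b\otimes g_j^{-1})=b\,E_{ij}$, where $E_{ij}$ denotes the elementary matrix with $1$ in the $(i,j)$ entry, and extend $K$-linearly. By the $K$-module decomposition above, $\Phi$ is automatically a $K$-module isomorphism (it sends a $K$-basis to a $K$-basis), so only compatibility with products remains. Recall that the interior $G$-algebra multiplication on $\mathrm{Ind^G_H}(B)$ defined by Th\'evenaz is given on elementary tensors by
$$(a\otimes b\otimes c)(a'\otimes b'\otimes c')=a\otimes b\,\tau(ca')\,b'\otimes c',$$
where $\tau:KG\to B$ is the $K$-linear map sending $g\in H$ to its image under the structural homomorphism $KH\to B$ and sending $g\in G\setminus H$ to $0$. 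The key computation is
$$(g_i\otimes b\otimes g_j^{-1})(g_k\otimes b'\otimes g_l^{-1})=g_i\otimes b\,\tau(g_j^{-1}g_k)\,b'\otimes g_l^{-1},$$
and since $g_j^{-1}g_k\in H$ precisely when $g_jH=g_kH$, i.e.\ $j=k$, in which case it equals $1$, we obtain $\tau(g_j^{-1}g_k)=\delta_{jk}\,1_B$. Hence the product equals $\delta_{jk}\,g_i\otimes bb'\otimes g_l^{-1}$, which $\Phi$ sends to $\delta_{jk}\,bb'\,E_{il}=(bE_{ij})(b'E_{kl})$. This is exactly matrix multiplication in $\mathrm{M}_{n}(B)$, so $\Phi$ is a $K$-algebra homomorphism, and being bijective it is an isomorphism.

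The step I expect to require the most care is pinning down the multiplication formula and the projection $\tau$ precisely enough to make the central computation rigorous — in particular checking that $\tau(ca')$ is well defined on the $KH$-balanced tensor product and that the elements $g_i\otimes 1\otimes g_j^{-1}$ genuinely form a system of matrix units, so that the $B$-part sits compatibly along the "diagonal." I would also remark that, although the representatives $g_i$ enter the definition of $\Phi$, the source algebra $\mathrm{Ind^G_H}(B)$ is defined intrinsically, so a different choice changes $\Phi$ only by an inner automorphism; making this explicit is a routine but worthwhile sanity check. Everything structural beyond this — associativity of the product and the interior $G$-algebra structure itself — may be imported from Th\'evenaz, so no further foundational work is needed.
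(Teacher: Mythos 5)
Your proof is correct. The paper itself gives no argument for this lemma --- it is imported by citation from Th\'evenaz --- and your construction (the coset decomposition $\mathrm{Ind^G_H}(B)=\bigoplus_{i,j}\, g_i\otimes B\otimes g_j^{-1}$, the verification that the elements $g_i\otimes 1_B\otimes g_j^{-1}$ form matrix units, and the check that $\tau$ is $KH$-balanced) is exactly the standard proof found in the cited source, so nothing further is needed.
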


\vspace{5mm}

In the following proposition, we will see that given certain conditions there is a way of relating the algebra obtained by the induction from $H$ to $G$ of a certain $H$-algebra with the algebra obtained by the idempotent $Tr^G_H(i)$ where $i$ is an idempotent fixed by $H$.  Given $A$ an interior $G$-algebra, we will denote the set of elements of $A$ fixed by $H$ as $A^H$ and $1_A$ as the multiplicative identity of $A$.

\vspace{5mm}

\begin{proposition}
\label{proposition:Thevenaz2} (Proposition 16.6, Th{\'e}venaz  \cite{thevenaz1995g}) Let $A$ be an interior $G$-algebra and let $H$ be a subgroup of $G$. Assume that there exists an idempotent $i \in A^H$ such that
$1_A = Tr^G_H(i)$ and $i^gi = 0$ for all $g \in G-H$. Then there is an isomorphism of interior G-algebras 
$$F : \mathrm{Ind^G_H}(iAi)\simeq A;$$ 
Given by  $x \otimes \hspace{1 mm} b \otimes y \mapsto x\cdot b \cdot y \hspace{2 mm} (x,y \in G, b \in iAi) $. 
\end{proposition}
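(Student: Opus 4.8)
The plan is to prove \Cref{proposition:Thevenaz2}, the statement that under the hypotheses on the idempotent $i$ there is an isomorphism of interior $G$-algebras $F:\mathrm{Ind^G_H}(iAi)\simeq A$ given by $x\otimes b\otimes y\mapsto x\cdot b\cdot y$. Since this is quoted directly from Th\'evenaz (Proposition 16.6), my goal is to sketch the standard argument. First I would check that $F$ is well-defined. Recall $\mathrm{Ind^G_H}(iAi)=KG\otimes_{KH}iAi\otimes_{KH}KG$, so I must verify that the assignment respects the two $KH$-balancings: for $h\in H$ the elements $xh\otimes b\otimes y$, $x\otimes hb\otimes y$, etc., must all have the same image. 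This follows because $b\in iAi\subseteq A^{?}$ and multiplication in $A$ is associative, together with the fact that the $H$-action on $A$ is by the inner automorphisms coming from the interior structure, so $hb$ as an element of the tensor factor corresponds to $h\cdot b$ in $A$; the map $x\otimes b\otimes y\mapsto xby$ is manifestly $KH$-balanced on both sides. I would also note $F$ lands in $A$ because $x,y\in G\subseteq A^*$ and $b\in A$.

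Next I would verify $F$ is a homomorphism of interior $G$-algebras. The interior $G$-algebra structure on the induced algebra has multiplication $(x\otimes b\otimes y)(x'\otimes b'\otimes y')=x\otimes b\,\mathrm{res}(yx')\,b'\otimes y'$ where the middle factor is nonzero only when $yx'\in H$ and is interpreted via $i\,(yx')\,i\in iAi$; here is precisely where the hypothesis $i^g i=0$ for $g\in G-H$ enters, since it forces the cross terms with $yx'\notin H$ to vanish, matching the fact that $F(x\otimes b\otimes y)F(x'\otimes b'\otimes y')=xb(yx')b'y'$ collapses correctly. I would check the unit maps to $1_A$: the identity of $\mathrm{Ind^G_H}(iAi)$ is $\sum_{g}g\otimes i\otimes g^{-1}$ summed over coset representatives, whose image under $F$ is $\sum_g g i g^{-1}=Tr^G_H(i)=1_A$, which is exactly the remaining hypothesis. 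Compatibility with the $G$-action is immediate since $G$ acts by left/right multiplication on the outer $KG$ factors and by conjugation on $A$.

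Finally I would prove $F$ is bijective. For surjectivity, any $a\in A$ satisfies $a=1_A\cdot a\cdot 1_A=Tr^G_H(i)\,a\,Tr^G_H(i)=\sum_{g,g'}g i g^{-1}a g'i g'^{-1}$, and each summand $g(ig^{-1}ag'i)g'^{-1}$ is the image of $g\otimes(ig^{-1}ag'i)\otimes g'^{-1}$, so $a$ is in the image. For injectivity, the cleanest route is a dimension count: by \Cref{lemma:Thevenaz} we have $\mathrm{Ind^G_H}(iAi)\cong\mathrm{M}_n(iAi)$ with $n=[G:H]$, so $\dim_K\mathrm{Ind^G_H}(iAi)=n^2\dim_K(iAi)$, and one shows $\dim_K A=n^2\dim_K(iAi)$ as well by decomposing $A=\bigoplus_{g,g'}g(iAi)g'^{-1}$ using the orthogonality $i^g i=0$ for $g\notin H$; a surjective $K$-linear map between finite-dimensional spaces of equal dimension is injective. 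The step I expect to be the main obstacle is the verification that $F$ is multiplicative, because it requires carefully unwinding Th\'evenaz's definition of the product on $\mathrm{Ind^G_H}(iAi)$ and seeing exactly how the vanishing condition $i^g i=0$ for $g\in G-H$ makes the middle-factor restriction well-defined and matched under $F$; everything else is bookkeeping, whereas this step is where the two hypotheses on $i$ do their essential work.
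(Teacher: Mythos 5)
The paper offers no proof of this proposition: it is imported verbatim from Th\'evenaz (Proposition 16.6) and used as a black box, so there is no internal argument to compare yours against. Judged on its own, your reconstruction is correct and is essentially the standard proof. You correctly locate where each hypothesis does its work: $Tr^G_H(i)=1_A$ is exactly what makes $F$ unitary (the identity $\sum_g g\otimes i\otimes g^{-1}$ maps to $Tr^G_H(i)$), while $i^gi=0$ for $g\in G-H$ kills the cross terms, since for $yx'\notin H$ one has $b(yx')b'=b\,i(yx')i\,b'=b\,\bigl(i\cdot{}^{yx'}i\bigr)(yx')\,b'=0$, matching the vanishing of the corresponding products in $\mathrm{Ind^G_H}(iAi)$; and your surjectivity computation $a=\sum_{g,g'}g\,(ig^{-1}ag'i)\,g'^{-1}$ is right. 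Two refinements are worth recording. First, your injectivity argument via dimension count requires $A$ to be finite dimensional over $K$; this is harmless in this paper (the result is only applied to $A=bKG_2$), but it is also unnecessary: the Peirce decomposition $A=\bigoplus_{g,g'\in G/H}{}^{g}i\,A\,{}^{g'}i$ that you already use for the dimension count, together with the $K$-module decomposition $\mathrm{Ind^G_H}(iAi)=\bigoplus_{g,g'\in G/H}\,g\otimes iAi\otimes g'^{-1}$ coming from coset representatives, shows that $F$ restricts on each summand to a bijection $g\otimes iAi\otimes g'^{-1}\to{}^{g}i\,A\,{}^{g'}i$ with explicit inverse $a\mapsto g\otimes g^{-1}ag'\otimes g'^{-1}$, so $F$ is bijective with no finiteness hypothesis. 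Second, a point of phrasing: the condition $i^gi=0$ is not what makes the product on $\mathrm{Ind^G_H}(iAi)$ well defined --- that construction works for any interior $H$-algebra --- it is needed only so that $F$ is multiplicative, i.e.\ so that the products in $A$ vanish whenever the corresponding products vanish upstairs; your wording slightly conflates these two roles, but your computation uses the hypothesis in the correct place.
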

 
\vspace{5mm}
  We use \Cref{proposition:Thevenaz2} to prove the following:
  
\begin{proposition}
\label{proposition:Block}
Given a block $b$ of $KG_2$, there is a block $\hat{\Phi}(b)$ of $KG'_2$ such that the following $K$-algebras $bKG_2$ and $\hat{\Phi}(b)KG'_2$ are isomorphic. 
\end{proposition}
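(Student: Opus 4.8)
The plan is to realize each block algebra as a matrix algebra over one of the stabilizer blocks already matched in \Cref{theorem:Stabilizer}, by feeding Clifford theory into Th\'evenaz's induction machinery. First I would unwind $b$ via \Cref{theorem:Clifford}: there is a block $e=e_\chi$ of $KN$ covered by $b$ and a unique block $d\in Bl(H_2|e)$ of $KH_2$ with $b=Tr_{H_2}^{G_2}(d)$, where $H_2$ is the stabilizer of $e$ in $G_2$. Since $e$ is $H_2$-stable it is central in $KH_2$, so $d=de$ and $d$ is fixed under $H_2$-conjugation; hence $d$ is an idempotent lying in $(bKG_2)^{H_2}$. The key orthogonality input is that distinct $G_2$-conjugates of $e$ are orthogonal blocks of $KN$: for $g\in G_2-H_2$ we have ${}^{g}e\neq e$, so $e\cdot{}^{g}e=0$, and writing $d=de$, ${}^{g}d={}^{g}d\cdot{}^{g}e$ this forces $d\cdot{}^{g}d=0$. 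Together with $Tr_{H_2}^{G_2}(d)=b=1_{bKG_2}$, this checks every hypothesis of \Cref{proposition:Thevenaz2} with $A=bKG_2$, $H=H_2$, $i=d$, yielding an isomorphism of interior $G_2$-algebras $\mathrm{Ind}_{H_2}^{G_2}\!\big(d\,(bKG_2)\,d\big)\cong bKG_2$.

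Next I would identify the algebra being induced. A short computation using the same orthogonality shows $e\,KG_2\,e=eKH_2$: for $x\in G_2$ one has $exe=e\cdot{}^{x}e\cdot x$, which equals $ex$ when $x\in H_2$ and $0$ otherwise. Since $d=de$, this gives $d\,(bKG_2)\,d=d\,KG_2\,d=dKH_2$, the block algebra of $d$ inside $KH_2$. Thus $bKG_2\cong\mathrm{Ind}_{H_2}^{G_2}(dKH_2)$, and by \Cref{lemma:Thevenaz} this is isomorphic as a $K$-algebra to $\mathrm{M}_{n}(dKH_2)$ with $n=[G_2:H_2]$.

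Finally I would transport everything to $G'_2$. The $K$-algebra isomorphism $\Phi$ of \Cref{theorem:Stabilizer} carries central primitive idempotents to central primitive idempotents, so it sends the block $d$ of $eKH_2$ to a block $d':=\Phi(d)\in Bl(H'_2|e)$ and restricts to a $K$-algebra isomorphism $dKH_2\cong d'KH'_2$. Defining $\hat{\Phi}(b):=Tr_{H'_2}^{G'_2}(d')$, which is a block of $KG'_2$ by \Cref{theorem:Clifford}, the identical argument gives $\hat{\Phi}(b)KG'_2\cong\mathrm{M}_{n'}(d'KH'_2)$ with $n'=[G'_2:H'_2]$. Because $|G_2|=|G'_2|$ and $H_2/N\cong H'_2/N$ by \Cref{lemma:stabilizer}, the indices agree, $n=n'$; combined with $dKH_2\cong d'KH'_2$ this yields $\mathrm{M}_{n}(dKH_2)\cong\mathrm{M}_{n'}(d'KH'_2)$ and hence $bKG_2\cong\hat{\Phi}(b)KG'_2$ as $K$-algebras.

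I expect the main obstacle to be the second step: correctly identifying $d\,(bKG_2)\,d$ with the stabilizer block algebra $dKH_2$, and making sure the interior $G_2$-algebra isomorphism produced by \Cref{proposition:Thevenaz2} is passed, via \Cref{lemma:Thevenaz}, to the plain $K$-algebra statement we actually need. The orthogonality facts driving both the verification of the hypotheses and the computation $e\,KG_2\,e=eKH_2$ are the technical heart, whereas the matrix-size matching at the end is routine once \Cref{lemma:stabilizer} is in hand.
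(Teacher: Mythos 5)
Your proposal is correct and follows essentially the same route as the paper: Clifford's theorem to write $b=Tr_{H_2}^{G_2}(d)$, verification of the hypotheses of \Cref{proposition:Thevenaz2} to get $bKG_2\cong\mathrm{Ind}_{H_2}^{G_2}(dKH_2)$, \Cref{lemma:Thevenaz} to pass to matrix algebras, and \Cref{theorem:Stabilizer} together with the equality of indices to transport to $G'_2$ via $\hat{\Phi}(b)=Tr_{H'_2}^{G'_2}(\Phi(d))$. The only difference is cosmetic: you prove the orthogonality $d\cdot{}^{g}d=0$ and the identification $dKG_2d=dKH_2$ directly, where the paper cites the proof of Lemma 6.8.4 in Linckelmann.
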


\begin{proof}
Fix $\chi \in \Irr(N)$ up to conjugation by $G$, and let $e=e_\chi$ a primitive idempotent of $KN$, now fix a block $b \in Bl(G_2|e)$ such that by  Clifford \Cref{theorem:Clifford} $b=Tr^{G_2}_{H_2}(d)$ for some primitive idempotent $d$ of $KH_2$ where $H_2$ is the stabilizer of $e$ in $G_2$. Notice by \Cref{theorem:Stabilizer}, we have that $ \Phi:eKH_2\cong eKH'_2$ as $K$-algebras. Now the map $\Phi$ gives a bijection between $Bl(H_2|e)$ and $Bl(H'_2|e)$ such that $ \Phi:dKH_2\cong \Phi(d)KH'_2$ for each $d \in Bl(H_2|e)$. Moreover, Clifford \Cref{theorem:Clifford} tells us there is a bijection between $Bl(G|e)$ and $Bl(H|e)$. Thus, we obtain a bijection between $Bl(G_2|e)$ and $Bl(G'_2|e)$ by the map  $b \mapsto \hat{\Phi}(b)=Tr^{G'_2}_{H'_2}(\Phi(d)) $.

Let $A=bKG_2$ so $1_A=b=Tr^{G_2}_{H_2}(d)$. Note $d\cdot b=d$ so $d \in A^{H_2}$ and $d^gd = 0$ for $g \in G_2-H_2$ \cite[In proof of Lemma 6.8.4]{linckelmann2018block}. Thus, we have that $$dAd=dbKG_2d=dKG_2d=dKH_2d=dKH_2.$$ Thus by \Cref{proposition:Thevenaz2}, there is an isomorphism of interior $G_2$-algebra: 

$$\mathrm{Ind^{G_2}_{H_2}}(dKH_2)\cong bKG_2$$ 

Note by \Cref{lemma:Thevenaz} that $\mathrm{Ind^{G_2}_{H_2}}(dKH_2)\cong \mathrm{M}_{n}(dKH_2)$ as $K$-algebras, with $n=|G_2:H_2|=|G'_2:H'_2|$. Since, $ \Phi:dKH_2\cong \Phi(d)KH'_2$ conclude that

$$ \hat{\Phi}: bKG_2 \cong \mathrm{Ind^{G_2}_{H_2}}(dKH_2) \cong \mathrm{Ind^{G'_2}_{H'_2}}(\Phi(d)KH'_2) \cong \hat{\Phi}(b)KG'_2.$$
Thus we have that $bKG_2$ and $\hat{\Phi}(b)KG'_2$ are isomorphic as $K$-algebras.
\end{proof}

With the above $K$-algebra isomorphism, we define a map $\Psi: KG_2 \to KG'_2$ such that $\Psi(a)=\sum\limits_{b \in Bl(G_2)}\hat{\Phi}(a\cdot b)$ for $a \in KG_2$ and show that this map is an isomoprhism of $K$-algebras.

\vspace{5mm}

\begin{theorem}
\label{theorem:Final}
Let $G_2=\mathbb{G}(\mathcal{O}_2)$ and $G'_2=\mathbb{G}(\mathcal{O}'_2)$, be the group of points of any reductive group scheme $\mathbb{G}$ over $\mathbb{Z}$ such that $p$ is very good for $\mathbb{G} \times \mathbb{F}_q$. There exists an isomorphism of group algebra $K[\mathbb{G}(\mathcal{O}_2)] \cong  K[\mathbb{G}(\mathcal{O}'_2)]$, where $K$ is a sufficiently large field of  characteristic different from $p$. 
\end{theorem}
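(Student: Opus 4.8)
The plan is to assemble the final isomorphism $\Psi: KG_2 \to KG'_2$ from the block-wise isomorphisms produced in \Cref{proposition:Block} and to verify that this assembled map is in fact an algebra isomorphism. First I would recall that $KG_2 = \bigoplus_{b \in Bl(G_2)} bKG_2$ is the decomposition of $KG_2$ into its block algebras, and similarly for $KG'_2$. Since each $a \in KG_2$ is written uniquely as $a = \sum_{b \in Bl(G_2)} a\cdot b$, the map $\Psi(a) = \sum_{b \in Bl(G_2)} \hat{\Phi}(a\cdot b)$ is well-defined, where $\hat{\Phi}$ on each summand is the isomorphism $bKG_2 \cong \hat{\Phi}(b)KG'_2$ from \Cref{proposition:Block}.

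The key point to establish is that $b \mapsto \hat{\Phi}(b)$ is a bijection $Bl(G_2) \to Bl(G'_2)$ that is \emph{compatible across all blocks simultaneously}, not merely for a single fixed conjugacy class of $e$. To see this I would range $\chi$ over a complete set of representatives of $G$-conjugacy classes of $\Irr(N)$; by \Cref{theorem:Clifford}(1) every block of $G_2$ covers some block $e_\chi$ of $N$, and the covered blocks of $N$ form a single $G_2$-conjugacy class, so each block of $G_2$ lies in exactly one $Bl(G_2|e_\chi)$. By \Cref{lemma:Characters} and \Cref{lemma:stabilizer}, the $G_2$-orbits and $G'_2$-orbits on $\Irr(N)$ coincide and the stabilizers agree mod $N$, so the same set of representatives $\chi$ indexes the blocks of $G'_2$. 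Hence $Bl(G_2) = \coprod_\chi Bl(G_2|e_\chi)$ and $Bl(G'_2) = \coprod_\chi Bl(G'_2|e_\chi)$, and on each piece $\hat{\Phi}$ is the bijection of \Cref{proposition:Block}; taking the union gives a global bijection $Bl(G_2) \cong Bl(G'_2)$.

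It then remains to check that $\Psi$ is an isomorphism of $K$-algebras. Linearity over $K$ is clear. For multiplicativity I would use that distinct blocks are orthogonal: if $b \neq b'$ in $Bl(G_2)$ then $(a\cdot b)(a'\cdot b') \in bKG_2 \cap b'KG_2 = 0$, and likewise on the $G'_2$ side, so both the source and target decompose multiplicatively along blocks. Thus $\Psi(a)\Psi(a') = \sum_b \hat{\Phi}(a\cdot b)\hat{\Phi}(a'\cdot b) = \sum_b \hat{\Phi}\big((a\cdot b)(a'\cdot b)\big) = \Psi(aa')$, using that each $\hat{\Phi}$ is multiplicative on its block and that $\hat{\Phi}$ sends the identity $\sum_b b = 1_{KG_2}$ to $\sum_b \hat{\Phi}(b) = 1_{KG'_2}$. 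Bijectivity of $\Psi$ follows because it is a direct sum of the isomorphisms $\hat{\Phi}$ over a bijective indexing of blocks. This establishes $KG_2 \cong KG'_2$, which is the assertion of the theorem.

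The main obstacle I expect is not in the formal assembly but in ensuring the block bijection is genuinely global and well-defined: one must confirm that the choice of $\hat{\Phi}(b)$ does not depend on artifacts of the chosen representatives $\chi$ or of the section used in \Cref{proposition:Twisted}, and that the orbit/stabilizer correspondences of \Cref{lemma:Characters,lemma:stabilizer} interact correctly with the Clifford-theoretic bijection of \Cref{theorem:Clifford}(3) so that $Bl(G_2)$ and $Bl(G'_2)$ are partitioned compatibly. Once that bookkeeping is pinned down, the multiplicativity and bijectivity are routine consequences of block orthogonality.
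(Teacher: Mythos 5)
Your proposal is correct and follows essentially the same route as the paper: the same map $\Psi(a)=\sum_{b}\hat{\Phi}(a\cdot b)$, multiplicativity via block orthogonality, and bijectivity from a global block bijection. Your explicit partition $Bl(G_2)=\coprod_\chi Bl(G_2|e_\chi)$ matched with $Bl(G'_2)=\coprod_\chi Bl(G'_2|e_\chi)$ via \Cref{lemma:Characters} is just a cleaner packaging of the paper's two-case argument (Case 1: injectivity of $\hat{\Phi}$ within $Bl(G_2|e)$; Case 2: blocks covering non-conjugate $e$'s stay distinct), with the paper instead concluding surjectivity by a dimension count.
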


\begin{proof}
Define the map $\Psi: KG_2 \to KG'_2$ by  $\Psi(x)=\sum\limits_{b \in Bl(G_2)}\hat{\Phi}(xb)$ for $x \in KG_2$. First, we show that $\Psi$ is an algebra homomorphism. Note that by definition it is a linear map since each of the $\hat{\Phi}$ from \Cref{proposition:Block} are. Now given $x$ and $y$ both in $KG_2$, we want to show that $\Psi(xy)=\Psi(x)\Psi(y)$. To prove this it is sufficient to show that different blocks of $G_2$ are mapped to different blocks of $G'_2$. Because if they are, we have that  $$\Psi(x)\Psi(y)=\sum\limits_{b \in Bl(G_2)}\hat{\Phi}(xb)\cdot \sum\limits_{b \in Bl(G_2)}\hat{\Phi}(yb)=\sum\limits_{b \in Bl(G_2)}\hat{\Phi}(xb)\cdot  \hat{\Phi}(yb).$$

Since the products of elements of different blocks is zero. Moreover by $\hat{\Phi}$ being a $K$- algebra homomorphism for each block $b$ we have: 

$$\Psi(x)\Psi(y)=\sum\limits_{b \in Bl(G_2)}\hat{\Phi}(xb)\cdot  \hat{\Phi}(yb)= \sum\limits_{b \in Bl(G_2)}\hat{\Phi}(xbyb)=\sum\limits_{b \in Bl(G_2)}\hat{\Phi}(xyb)=\Psi(xy)$$

Now to show that different blocks of $G_2$ are mapped to different blocks of $G'_2$ we look at two cases. Let $b$ and $c$ be two different blocks of $G_2$:

\vspace{2mm}

Case 1: Assume $b$ and $c$ cover the same block $e$ of $KN$.
Thus, by Clifford \Cref{theorem:Clifford} $b=Tr^{G_2}_{H_2}(d_1)$ and $c=Tr^{G_2}_{H_2}(d_2)$ such that $d_1 \cdot d_2=0$. Recall by \Cref{theorem:Stabilizer} that the map $\Phi: eKH_2 \cong eKH'_2$ is a $K$-algebra isomorphism, thus $\Phi(d_1) \Phi(d_2)=0$. Applying Clifford \Cref{theorem:Clifford} for the block of $KG'_2$ over $e$, we have that  $Tr^{G'_2}_{H'_2}(\Phi(d_1))\neq Tr^{G'_2}_{H'_2}(\Phi(d_2))$, since $ \Phi(d_1)\neq \Phi(d_2)$. Thus $\hat{\Phi}(b)\neq \hat{\Phi}(c)$.

\vspace{2mm}

Case 2: Assume $b$ and $c$ cover different blocks $e_1$ and  $e_2$ of $KN$ respectively. Note to prove that $\hat{\Phi}(b)\neq \hat{\Phi}(c)$, it is enough to show that they cover different blocks of $KN$ as blocks of $G'_2$. Assume the opposite. By definition of the map $\hat{\Phi}$, we also have that $\hat{\Phi}(b)$ and $\hat{\Phi}(c)$ cover the blocks $e_1$ and  $e_2$ of $KN$ respectively. Thus by \cite[Proposition 6.8.2]{linckelmann2018block}, we have that  $e_1$ and  $e_2$ are $G'_2$-conjugated blocks. Since $KN$ is a semisimple algebra i.e all the blocks have defect zero, $e_1$ and  $e_2$ are defined by a unique ordinary character $\chi$ and $\tau$ of $KN$. Thus $e_1$ and  $e_2$ are $G'_2$-conjugated blocks if and only if $\chi$ and $\tau$ are $G'_2$-conjugated characters. By \Cref{lemma:Characters}, we can conclude that $\chi$ and $\tau$ are $G_2$-conjugated and thus so are $e_1$ and  $e_2$. This is a contradiction with the fact that $b$ and $c$ cover different blocks $e_1$ and  $e_2$ of $KN$.

Thus, we can conclude that different blocks of $G_2$ are mapped to different blocks of $G'_2$. Therefore, we have an algebra homomorphism from $\Psi: KG_2 \to KG'_2$ and since each $\hat{\Phi}$ is injective and maps different blocks to different blocks, we can conclude $\Psi$ is an injective map. Thus by dimension reasons, $\Psi$ is also surjective. Therefore, $KG_2 \cong KG'_2$ as $K$-algebras.
\end{proof}

\vspace{2mm}

\bibliographystyle{plain}
\bibliography{THM_1.bib}

\end{document}